\newcommand{\bx}{\boldsymbol{x}}
\newcommand{\by}{\boldsymbol{y}}
\newcommand{\bX}{\boldsymbol{X}}
\newtheorem{lemma}{Lemma}
\newtheorem{example}{Example}
\newtheorem{theorem}{Theorem}
\newtheorem{proposition}[theorem]{Proposition}%
\begin{document}
\title[Article title]{Fisher-Rao distance between truncated distributions and robustness analysis in uncertainty quantification}

\author*[1,2]{\fnm{Baalu Belay} \sur{Ketema}}\email{baalu-belay.ketema@edf.fr}
\author[2,3,4]{\fnm{Nicolas} \sur{Bousquet}} 
\author[2]{\fnm{Francesco} \sur{Costantino}}
\author[2]{\fnm{Fabrice} \sur{Gamboa}}
\author[1,2,4]{\fnm{Bertrand} \sur{Iooss}}
\author[1]{\fnm{Roman} \sur{Sueur}}

\affil[1]{\orgname{EDF R\&D}, \orgaddress{\street{6 quai Watier}, \postcode{78401}, \city{Chatou}, \country{France}}}

\affil[2]{\orgdiv{IMT}, \orgname{Universit\'e Paul Sabatier}, \orgaddress{\street{118 Route de Narbonne}, \city{Toulouse}, \postcode{31062}, \country{France}}}

\affil[3]{\orgdiv{LPSM}, \orgname{Sorbonne Universit\'e}, \orgaddress{\street{4 place Jussieu}, \city{Paris}, \postcode{75005}, \country{France}}}

\affil[4]{\orgdiv{SINCLAIR AI Laboratory, EDF R\&D}, \orgname{Sorbonne Universit\'e}, \orgaddress{\city{Palaiseau}, \postcode{91120}, \country{France}}}

\abstract{Input variables in numerical models are often subject to several levels of uncertainty, usually modeled by probability distributions. In the context of uncertainty quantification applied to these models, studying the robustness of output quantities with respect to the input distributions requires: (a) defining variational classes for these distributions; (b) calculating boundary values for the output quantities of interest with respect to these variational classes. The latter should be defined in a way that is consistent with the information structure defined by the “baseline” choice of input distributions. Considering parametric families, the variational classes are defined using the geodesic distance in their Riemannian manifold, a generic approach to such problems. Theoretical results and application tools are provided to justify and facilitate the implementation of such robustness studies, in concrete situations where these distributions are truncated -- a setting frequently encountered in applications. The feasibility of our approach is illustrated in a simplified industrial case study.}

\keywords{Fisher-Rao geodesic distance ; information geometry ; geodesic curve ; truncated distributions ; robustness analysis ; uncertainty quantification }

\maketitle

\section{Introduction}

In applied mathematics, the importance of uncertainty quantification (UQ) stems from the considerable development of engineering and decision support methods. Generally, these methods are based on numerical models or models learned from data (not to mention hybrid ones) and provide predictive diagnoses. Robustness analyses (RA) form an important part of such studies \cite{idrissi2022quantileconstrained,DaVeiga2021}.
Indeed, the goal of RA is to quantify the impact of the modeling assumptions in the model input variables.
This step is crucial in many applications, for example when a certification from authorities is required \cite{largau24}.

Very generically, RA  can be  formalized as follows. Denoting $G(\textbf{X})=Y$ a deterministic model of interest with inputs $\boldsymbol{X}:=(X_1,...,X_d)$ and output $Y$, $\boldsymbol{X}$ is modeled as a random variable with distribution $P_{\textbf{X}}$ on $\mathbb R^d$ (either theoretical or empirical).
Let us define  some quantity of interest (QoI) on $Y$. For instance, many authors in structural reliability consider a unidimensional output $Y$ and define $\text{QoI}(Y)$ as \textcolor{black}{a function of a probability}, quantile or superquantile of $Y$ induced by $P_\textbf{X}$ given $G$ \cite{AJENJO2022102196,Lemaitre2015,MAUMEDESCHAMPS2018122,Sueur2017,iooss2022bepu}.  Often $P_\textbf{X}$ suffers from  \emph{misspecification}.  
Indeed, the assessment of a statistical distribution for $\boldsymbol{X}$ from a mixture of observations and external knowledge, a current approach in many engineering fields \cite{dienstfrey2012uncertainty,Hanea2022}, is subject to 
modeling errors. 
The stacking of these errors is  generally interpreted as  so-called \emph{epistemic uncertainties} \cite{Helton1996,Hullermeier2021}. 
An empirical vision of this problem, in a machine learning framework, is known as the domain shift problem \cite{Turhan2012}. 
The aim of RA  studies is therefore to check, through some \emph{robustness indices}, whether $\text{QoI}(Y)$ remains weakly dependent on this misspecification of $P_\textbf{X}$, by making $P_\textbf{X}$ vary in a given \textcolor{black}{\emph{variational class} ${\cal{C}}$}, a subspace of the space ${\cal{P}}$ of feasible distributions on $\boldsymbol{X}$.
Therefore, following the semantics proposed by \cite{Lemaitre2015}, RA studies are based on  \emph{distributional perturbation} methods.

Defining ${\cal{C}}$ was the subject of many works in recent years \cite{AJENJO2022102196}.
A major contribution to the field was made by \cite{Gauchy2022}, providing considerable motivation for our work. In this paper, the inputs $X_1,...,X_d$ are assumed \textcolor{black}{independent} and $P_\textbf{X}$ varies only through its $d$ marginals $P_{X_i}$ in a certain family $\mathcal P_i$ of distributions on $\mathbb R$. The authors then argue that a generic RA with respect to (w.r.t.) input domain misspecification requires to handle the information geometry of each $\mathcal P_i$ 
through its Fisher-Rao (FR) distance $d_i$ \cite{rao1945information}. This requires that each $\mathcal P_i$ is 
a parametric family $\mathcal P_i=\{P_{X_i,\theta}\}_{\theta\in \Theta_i}$ of distributions on $\mathbb R$ containing $P_{X_i}$ and with well-defined Fisher information. The authors also explain that a good candidate for the variational class $\mathcal C$ is given by a collection of concentric FR spheres $\Lambda_{i\delta}$ with radii $\delta\in(0,\delta_{\max}]$ centered at the marginal distribution $P_{X_i}$, for each $i$. Then, they propose to compute robustness indices based on the maximum and minimum values of a QoI over these FR spheres $\Lambda_{i\delta}$.  

This approach promotes a more objective RA framework than the perturbations of probability distributions based on entropy criteria,  that involve inherently subjective form constraints, as generalized moment constraints proposed in \cite{Bachoc2020,Lemaitre2015}. Besides, perturbations based on the intrinsic geometry of distributions are by essence multivariate, which allows to go beyond RA based on optimal transport techniques modifying marginal percentiles  \cite{idrissi2022quantileconstrained}. 

The proposal made by \cite{Gauchy2022} is part of a slow but progressive trend in UQ to manipulate Riemannian and information geometry.
Information theory was used in \cite{ludtke2008information} as early as 2008 
to define sensitivity indices based on input-output mutual information. One of the first use of information geometry in UQ may be found in \cite{sternfels2013geometric} for building reduced order models from snapshots. Other works in this field followed since, for instance in Bayesian UQ \cite{house2016bayesian}. Very recently, a symplectic decomposition of the Fisher information matrix is used for parameter sensitivity analysis in \cite{yang2024decision} while information geometric techniques are put in action for studying sloppy models in \cite{kurniawan2022bayesian}. But up to our knowledge, the only work proposing an information geometry-based method for RA applied to numerical models, handling two-parameter distributions (but suffering from poor accuracy performance linked to Monte Carlo approximations of Fisher information) remains \cite{Gauchy2022}. 

This relatively slow adoption by the UQ community is probably due to: (a) a lack of knowledge about the required geometrical tools; and (b) a lack of known results (except those produced in \cite{Gauchy2022}) concerning the theoretical features and practical calculations, for concrete problems, of parametric distributions defining ${\cal{C}}$.

This twofold need is the main motivation of our work. Let us discuss the results below. First, we provide an illustration of the  method and highlight the explicit correspondence between the FR spheres $\Lambda_{i\delta}$  and distributional perturbations on selected families of 
probability distributions (normal, log-normal, Gumbel, Gamma, triangular, exponential, etc.).  Second, we obtain new explicit computations on Fisher information and Christoffel symbols for a truncated version of the normal distributions. This is of particular interest as they are often used in applications \cite{nechval2016novel}. This allows us to derive the corresponding FR distances. Lastly, we  present more general results for handling two common classes of families of distributions, namely location-scale and pushforward families.

More precisely, we obtain these results by taking the following path. We first propose in Section \ref{sec:information.geometry} an initiation or reminder on information geometry, in a classical theoretical setting. 
This description is adapted to the special case of truncated distributions in Section \ref{sec:RA framework and truncated distributions}, where we also detail the RA framework, explain the distributional perturbation method and illustrate the latter on the normal family in usual and truncated cases. Section \ref{sec: Fisher-Rao spheres illustration} gives the main algorithm for computing the FR spheres $\Lambda_\delta$ and illustrates the perturbation method on the families mentioned above. A numerical experiment is proposed in Section \ref{sec:numerical experiment}, describing the 
optimization procedure and illustrating the feasibility of robustness analyses, through the study of a simplified flood model. Besides, a discussion section closes our paper, opening towards several research perspectives aimed at extending the use of this type of robustness analysis in UQ problems.

\textcolor{black}{We postpone to Appendix \ref{appendix: statistics} some technical details on the statistical procedure for estimating (with bootstrap-based confidence intervals) the robustness indices proposed in \cite{Gauchy2022}. Proofs and additional geometric explanations are given in Appendix \ref{appendix: geometric results}. Some tedious technical computations and explanations on the Fisher information and Christoffel symbols are provided in a Supplementary Material. 
Finally, Table \ref{tab:acronyms} provides the main acronyms used in this paper.}

\begin{table}[h!]
    \centering
    \begin{tabular}{cc}
        \hline
        UQ & Uncertainty quantification\\
        \hline
        RA & Robustness analysis\\
        \hline
        QoI & Quantity of interest \\
        \hline
        FR & Fisher-Rao\\
        \hline
        FIM & Fisher information matrix \\
        \hline
        pdf & probability density function\\
        \hline
    \end{tabular}
    \caption{Main acronyms}
    \label{tab:acronyms}
\end{table}

\section{Riemannian geometry of probability distributions}\label{sec:information.geometry}

Since the pioneering works by Hotelling \cite{hotelling1930spaces}, Rao \cite{rao1945information} and Jeffreys \cite{jeffreys1946invariant}, exploring the geometry of \textcolor{black}{parametric families of distributions} has become a very active reseach area, 
which has notably produced new interpretations of the mechanisms of statistical inference \cite{efron1975defining,kass1989geometry}. However, this framework often remains unfamiliar to UQ practitioners. 
Therefore basic concepts and tools of Riemannian and information geometry -- which can be skipped on first reading -- are briefly presented in the following paragraphs. Readers interested in more details are referred to  \cite{amari2000methods}, \cite{calin2014geometric}, \cite{do1992riemannian} and \cite{nielsen2020elementary}.

\subsection{Basic notions}\label{sec: basic notions in riem geom}

Riemannian manifolds are smooth manifolds $M$ equipped with a metric tensor usually denoted $g$ (page 38 in \cite{do1992riemannian}, Definition 2.1). \textcolor{black}{At a given base point $p\in M$, this metric tensor denoted $g_p$ is a scalar product on each tangent space $T_p M$ of $M$ and varies smoothly w.r.t. the base point $p$}. This metric allows to generalize on manifolds many  geometric notions like angles, norms and distances already known on the 2D plane or 3D space. For instance, the distance between two points $\bx$ and $\by$ on $M$ is defined as the length of the shortest paths connecting $\bx$ and $\by$\begin{align} \label{eq: riem dist}
d(\bx,\by):= \inf_{\alpha(0) = \bx, \  \alpha(1) = \by}   \ell(\alpha)
\end{align}
where $\ell(\alpha)$ is the length of the curve $\alpha:[0,1] \to M$ defined as
$$\ell(\alpha) = \int_0^1 |\dot \alpha(\tau)|_{\alpha(\tau)}  d\tau$$
and $|v|_{\bx} $ is the norm of $v\in T_{\bx} M$ defined as $|v|_{\bx}= \sqrt{g_{\bx}(v,v)}$. This distance is called the Riemannian geodesic distance and it is intrinsic to the metric $g$. 

Affine connections, an important class of objects in geometry, are differential operators that allow to compute directional derivatives of vector fields. On Riemannian manifolds there exists a unique one, called the Levi-Civita connection and denoted $\bar \nabla$, that is compatible to the metric (and torsion-free). \textcolor{black}{Roughly speaking, the connection $\bar \nabla$ provides a natural extension of the classical formula for differentiating a scalar product (see Definition 3.1 page 53 in \cite{do1992riemannian}).} Denoting $m$ the dimension of $M$, at each point $p\in M$ this connection is encoded by $\frac{m^2(m+1)}{2}$ coefficients $\bar \Gamma_{ij}^k(p)$ for $1\leq i,j,k\leq m$ called \emph{Christoffel symbols}, which depend on a local chart around $p$, say $x^1,...,x^m$. In this chart, these symbols are given by
\begin{align} \label{Christoffel symbols}
    \bar \Gamma_{ij}^k  = \sum_{l=1}^m\frac{g^{kl}}{2} \left( \frac{\partial g_{il}}{\partial x^j} + \frac{\partial g_{jl}}{\partial x^i} - \frac{\partial g_{ij}}{\partial x^l}  \right),
\end{align}  
where $g_{ij}$ and $g^{ij}$ correspond resp. to the $ij$ coefficient of the matrix of $g$ and its inverse and $\frac{\partial g_{ij}}{\partial x^l}$ are the partial derivatives of $g_{ij}$  in the chart $x^1,...,x^m$. All these quantities are evaluated at point $p\in M$.

Affine connections allow us to define the second derivative, or ``acceleration", of a curve $\gamma: [0,1] \to M$ 
by seeing $t \mapsto\dot \gamma_t$ as a vector field along $\gamma$ and computing its directional derivative at point $\gamma_t$ and in the direction $\dot \gamma_t$ as
$$\bar \nabla_{\dot \gamma_t} \dot \gamma_t \in T_{\gamma_t} M,$$
which can be thought as the derivative of $\dot \gamma$ i.e. the second derivative of $\gamma$. Curves that have no ``acceleration" are called \emph{geodesics} and they solve a second order differential equation given by $\bar \nabla_{\dot \gamma} \dot \gamma = 0$, or in coordinates (with Einstein summation convention)
\begin{align}\label{geodesic equation}
    \ddot \gamma^k + \dot \gamma^i \bar\Gamma_{ij}^k \dot \gamma^j = 0, \ \ \text{for all } k=1,...,m.
\end{align}  

Geodesics $\gamma$ locally minimize the Riemannian distance (\ref{eq: riem dist}) since for small $t>0$ 
\begin{align*} 
  d(\gamma_0,\gamma_t) = \int_0^t |\dot \gamma_\tau|_{\gamma_\tau}  d\tau = t|\dot \gamma_0|_{\gamma_0}.  
\end{align*}
This shows that  
geodesics $\gamma$ are constant speed curves,  
since the distance between $\gamma_0$ and $\gamma_t$ is given by the initial speed $|\dot \gamma_0|_{\gamma_0}$ multiplied by the travel time $t$.

\subsection{Geometry on a family of parametric probability distributions}
Given a parametric family $\mathcal P = \{P_\theta\}_{\theta\in\Theta}$ 
of probability measures, with $\Theta$ an open set in $\mathbb R^d$, the seminal idea proposed by Hotelling and Rao \cite{rao1945information,nielsen2020elementary} is to describe $\mathcal P$ as a Riemannian manifold endowed with the Riemannian metric given by the Fisher Information matrix (FIM) $I_\theta$, whose components are 
\begin{align}\label{eq:coord Fisher}
(I_\theta)_{ij} = \mathbb E_{X\sim P_\theta} \Big[\partial_i \log p_\theta(X)\partial_j \log p_\theta(X)\Big] 
\end{align}
where $p_\theta = {dP_\theta}/{d\mu}$ is a probability density function (pdf) of $P_\theta$ w.r.t. a reference measure $\nu$ and $\partial_i \log p_\theta(x) = \frac{\partial}{\partial \theta^i} \log p_\theta(x)$.

A few regularity conditions are usually assumed to ensure the FIM exists and is positive semi-definite. For instance $\mbox{Supp}(P_\theta)$ should be independent from $\theta\in\Theta$;  
the map $\theta \mapsto p_\theta(x)$ should be smooth;  and 
the integration $\mathbb E_{X\sim P_\theta}(\cdot)$ and differentiation $\frac{\partial}{\partial \theta^i}$ operations should be interchangeable. 
In addition, linear independence of the functions $\{x \mapsto \partial_{i} p_\theta(x)\}_{1\leq i\leq d}$ for all $\theta$ guarantees the positive-definiteness of $I_\theta$. 
More details on the required regularity assumptions are provided in \cite{lehmann2006theory,calin2014geometric,amari2000methods}. 

Under these assumptions, the FIM $I_\theta$ is the matrix representation of a scalar product $g_\theta$ on the tangent space of $\mathcal P$ at $P_\theta$, usually called Fisher metric. 
Assuming the smoothness of (\ref{eq:coord Fisher}) w.r.t. $\theta$, the couple $(\mathcal P,g)$ becomes a Riemannian manifold. This allows to conduct geometrical analyses on $\mathcal P$ using the intrinsic Riemannian distance (\ref{eq: riem dist}), geodesic curves (\ref{geodesic equation}) and many more tools.
With our previous notations, the manifold is $M:=\mathcal P$ and the metric tensor is $g_\theta$.

For a parametric family of distributions $\mathcal P$, the Riemannian geodesic distance $d$ given by (\ref{eq: riem dist}) is called the FR distance. This distance is intrinsic to the family $\mathcal P$ and is invariant under reparameterization 
\cite{calin2014geometric}. In addition, for $P_\theta$ in the vicinity of $P_{\theta_0}$, the FR distance behaves like the Kullback-Leibler divergence $\mathcal D$: 
$$\mathcal D(P_\theta | P_{\theta_0}) = \frac{1}{2} d(P_\theta,P_{\theta_0})^2 + o\big(d(P_\theta,P_{\theta_0})^2\big),$$
where $\mathcal D(P_\theta| P_{\theta_0}):= \mathbb E_{X\sim P_\theta} \left[\log\left({P_\theta}/{P_{\theta_0}}(X)\right)\right]$. However, a global statistical interpretation for $d$ is less obvious. 
Computing an approximation of $d$ is still possible if the Christoffel symbols (\ref{Christoffel symbols}) are available. This will be further discussed in Section \ref{sec: Fisher-Rao spheres illustration}.

\section{Robustness analysis and truncated probability distributions}\label{sec:RA framework and truncated distributions}

\subsection{Robustness analysis framework }\label{sec:RA.framework}
Echoing the concepts and notations introduced in the first section, let us consider the RA framework \cite{Gauchy2022} defined by:
\begin{itemize}
    \item A QoI on the stochastic variable $Y:=G(\bX)$ where $G$ is a deterministic forward model and
    $\bX = (X_1,\ldots,X_d)$ is a random vector with distribution $P_{\bX}$;
    \item A family $\mathcal P_i$, for each input $X_i$, of probability distributions each containing the \emph{baseline} marginal $P_{X_i}$ of $P_{\bX}$, with pdf $f_i$;
    \item A distance $d_i$ on each family $\mathcal P_i$, that will be precised in the sequel. This allows to define the $\delta$-\textit{perturbations} of $f_i$ as any other element  $f_{i\delta}\in\mathcal P_i$ at $d_i$-distance $\delta$ from $f_i$. The collection of such elements is denoted by $\Lambda_{i\delta}$ and forms a metric sphere in $\mathcal P_i$; 
    
    \item A set $\{S_i\}_{1\leq i \leq d}$  of robustness indices, as for example the perturbed law-based indices (PLI ; \cite{Lemaitre2015}) defined for nonzero QoI:
    \begin{align}
        S_i(f_{i\delta})  =  \frac{\text{QoI}(Y^{i\delta})}{\text{QoI}(Y)} - 1, \label{eq:PLI}
    \end{align}
    where $Y^{i\delta}:=G(\textbf{X}^{i\delta})$ for $\textbf{X}^{i\delta}\sim P_{\boldsymbol{X}^{i\delta}}$ and $P_{\boldsymbol{X}^{i\delta}}$ is the joint distribution of $\bX$ for which only the $i$-th marginal pdf $f_i$ has been replaced by some $f_{i\delta}\in \Lambda_{i\delta}$. \textcolor{black}{In other words, $P_{\boldsymbol{X}^{i\delta}}$ has density $f_1\otimes \ldots \otimes f_{i\delta} \otimes \ldots \otimes f_d$.}
\end{itemize}

Having set the RA framework, we now explain how the RA study of $G$ is converted into a mathematical problem. We then precise the distance $d_i$ used on $\mathcal P_i$ before defining truncated distributions and explaining our interest in such families for RA. The $\delta$-perturbations for distance $d_i$ are illustrated in the normal family for both the truncated and non-truncated case. 

The indices $S_i$ can be seen as functions $f_{i\delta} \in \mathcal P_i \mapsto S_i(f_{i\delta}) \in \mathbb R$ quantifying the robustness of $\text{QoI}(Y)$ w.r.t. perturbations on $f_i$. A value close to zero for $S_i(f_{i\delta})$ means that $\text{QoI}(Y)$ is not significantly affected when perturbing $f_i$ to $f_{i\delta}$, reflecting its robustness w.r.t. the distribution of $X_i$. The following bounds or \textcolor{black}{\emph{synthetic indices}}, introduced in \cite{Gauchy2022}, quantify the worst impact of such perturbations given a $\delta$ ranging in $(0,\delta_{\max}]$:
\begin{align*}
    S_{i\delta}^-:=\min_{f_{i\delta} \in \Lambda_{i\delta}} S_i(f_{i\delta}) \ \ \ \ \text{and} \ \ \ S_{i\delta}^+:=\max_{f_{i\delta} \in \Lambda_{i\delta}} S_i(f_{i\delta}) \tag{$\star$}.
\end{align*}
Ultimately, a maximum level $\delta_{\max}$ must be elicited to get a practically usable methodology, which remains an open topic (see Section \ref{sec: discussion}).

In practice, the robustness indices $\{S_i(f_{i\delta})\}_i$ cannot be computed explicitly
and must be estimated using evaluations from the model $G$. Details on the estimation procedure used in \cite{Lemaitre2015, Sueur2017, iooss2022bepu, Gauchy2022} are provided in Appendix \ref{appendix: estimation procedure}. Similarly, the resolution of $(\star)$ cannot
be achieved through usual gradient-based optimization techniques and thus requires a dedicated
algorithm.

\subsection{Defining the perturbation sets 
} \label{subsec: def perturbation}

Defining how the baseline pdf (or equivalently distribution) $f_i$ should be  perturbed has been a research subject considered by numerous authors \cite{AJENJO2022102196,defaux2018efficient,Lemaitre2015}. 
In this article, for the sake of \textcolor{black}{genericity} and following the recommendations given in \cite{Gauchy2022}, we assume $\mathcal P_i$ is a parametric family $\mathcal P_{i}=\{f_{i,\theta}\}_{\theta \in \Theta_i}$ containing $f_i$.
Each $\mathcal P_{i}$ is endowed with the Fisher metric as explained in Section \ref{sec:information.geometry} and $d_i$ is taken as the corresponding FR distance. Accordingly, the subsets $\{\Lambda_{i\delta}\}_{\delta \leq \delta_{\max}}$ are defined as concentric FR spheres centered at $f_i$ with radii $\delta\in (0,\delta_{\max}]$:
$$\Lambda_{i\delta}:= \{f_{i,\theta} \in \mathcal P_{i} \ | \ d_i(f_i,f_{i,\theta})=\delta\}.$$

Let us illustrate this distributional perturbation method on $\mathcal P:=\{\mathcal N(\mu,\sigma^2)\}_{(\mu,\sigma)\in\Theta}$ the normal family by computing the spheres $\Lambda_{\delta}$, the exact method for such computations is detailed in Section \ref{sec: Fisher-Rao spheres illustration}. Let us first point out that the FIM in the $(\mu,\sigma)$-parameterization is easily obtained (see \cite{nielsen2020elementary}). Indeed, we have
$$ I_\theta = \frac{1}{\sigma^2}\displaystyle \begin{bmatrix}
    1 & 0 \\ 0 & 2
\end{bmatrix}, \ \ \ \theta=(\mu,\sigma) \in \Theta,$$
which defines the same geometry as the so-called Poincaré metric on the Poincaré half-plane $\mathbb H:=\{x+iy \ | \ y>0\}$ (see \cite{costa2015fisher}, as well as Section \ref{sec: loc-scale families} below on location-scale families). Most geometric quantities can thus be computed explicitly. This is not the case in general. For instance the Christoffel symbols $\bar \Gamma_{ij}^k$ (\ref{Christoffel symbols}) are given by
$$ \bar \Gamma_{12}^1=\bar \Gamma_{21}^1 =\bar \Gamma_{22}^2= \frac{-1}{\sigma}, \ \ \ \bar \Gamma_{11}^2=\frac{1}{2\sigma}, \ \ \ \bar \Gamma_{11}^1 = \bar \Gamma_{22}^1 = \bar \Gamma_{21}^2 = \bar \Gamma_{12}^2 = 0.$$

Figure \ref{fig: non truncated normal family} gives an illustration of the concentric FR spheres and $\delta$~-~perturbations of a normal density function.

\begin{figure}[!ht]
     \centering
     \begin{tabular}{ll}
     \begin{minipage}{0.5\textwidth}
         \includegraphics[scale=0.22]{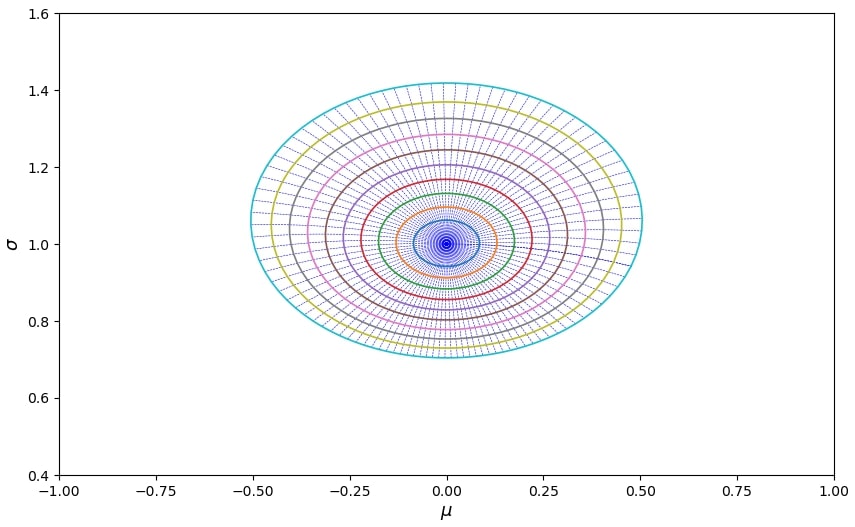}
    
    \footnotesize Concentric FR spheres $\Lambda_{\delta}$ centered at $\mathcal N(0,1)$ for radii $\delta \leq 1$ in the $(\mu,\sigma)$ parameter space.
    \end{minipage}
     & 
     \begin{minipage}{0.5\textwidth}
         \includegraphics[scale=0.15]{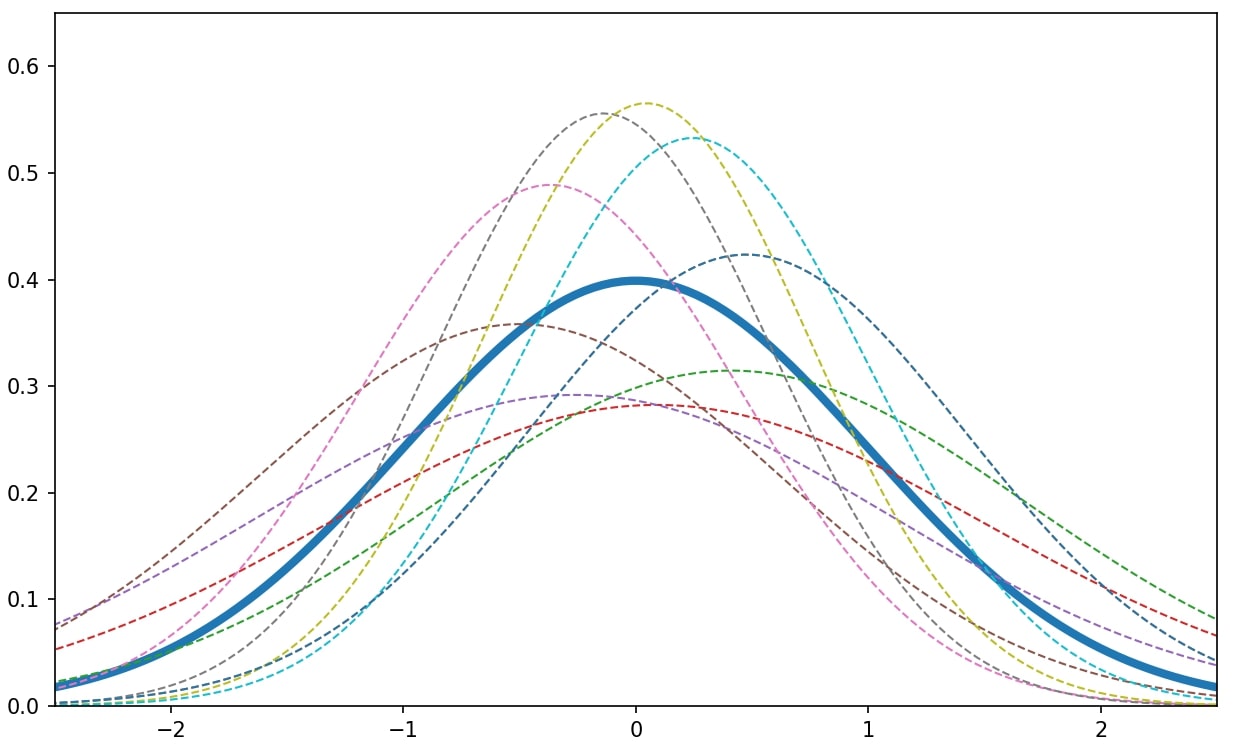}
         
         \footnotesize Some $\delta$-perturbations (in dashed) of the $\mathcal N(0,1)$ density (in solid) i.e. elements of $\Lambda_{\delta}$ for $\delta=1$.
    
    \end{minipage} \\
    \end{tabular}
    \caption{$\delta$~-~perturbations of a normal density function.}
    \label{fig: non truncated normal family}
\end{figure}

\subsection{Robustness analysis for truncated distributions}

Specifying the RA framework for truncated marginals of $P_{\boldsymbol{X}}$ appears particularly 
relevant in many engineering problems (e.g. in risk, reliability and safety issues \cite{Du2012}). 

In UQ problems, uncertain (stochastic) inputs of computer models $G$ used for such analyses are often considered on a restriction of their support, focusing on the most critical situations (i.e., leading to critical output values $Y$). For instance, in \cite{Gauchy2022} the authors consider two case studies in flood and nuclear risks, each involving computer models where the input distributions (e.g., Gumbel, normal, log-normal) are truncated on sub-domains that are most likely to lead to undesired output situations. Such studies, that typically aim to test designs, are obviously concerned with robustness analyses. Other truncated distributions can originate from the physical limitations of sensors \cite{Ding2016} or to respect the physical meaning of input variables (e.g., in hydrology \cite{Shao2023}, wind energy production \cite{PAPI2021701} or wind lidar calibration \cite{zhang2022high}).    

Therefore, we focus on a 1D marginal component $X$ of $\bX$, and consider interval truncation. 
Assuming that the possible distributions of $X$ are modeled by a parametric family $\mathcal P:=\{
P_\theta\}_{\theta \in \Theta}$ with densities $f_\theta$, the \textit{truncated distributions} of $X\sim P_\theta$ on an interval $[a,b]$ are defined as the conditional probability distributions $Q_\theta$ of $X\sim P_\theta$ over $[a,b]$ with corresponding pdf $q_\theta$ resp. given by
$$Q_\theta(A):=\frac{P_\theta(A\cap[a,b])}{P_\theta([a,b])} \ \ \ \text{ and } \ \ \ q_\theta(x) := \frac{f_\theta(x) }{P_\theta([a,b])}\textbf{1}_{x\in [a,b]}.$$
We can therefore define the FIM, the FR distance and other geometric quantities on the truncated family $\mathcal Q:=\{Q_\theta\}_{\theta\in\Theta}$.

Note that beyond the interval truncation considered here, truncated marginal distributions can be defined on any measurable subset $B\subset \mathbb R$ as well as for densities on $\mathbb R^d$.

Before moving on to the next section, let us illustrate this by considering a truncated version of the normal family on $[a,b]$, denoted $\mathcal N_{[a,b]}$, whose pdfs are given by
$$q_\theta(x) = \frac{1}{N_\theta} \frac{1}{\sqrt{2\pi}\sigma} \exp\left(-\frac{(x-\mu)^2}{2\sigma^2}\right)\textbf{1}_{x\in[a,b]}, \ \ \ \text{for } \theta = (\mu,\sigma),$$
where $N_{\theta}=(\sqrt{2\pi}\sigma)^{-1}\int_a^b  \exp\big(-(x-\mu)^2/(2\sigma^2)\big)\ dx $ is the normalizing constant. We proved  the following result. 

\begin{proposition} \label{prop: tr normal FIM}
    The FIM for the $\mathcal N_{[a,b]}$ family is given by
\begin{align*}
I_\theta^{[a,b]} = \frac{1}{\sigma^2}\begin{bmatrix}
     \displaystyle \partial_\mu \mu_{[a,b]}  & \displaystyle \partial_\sigma \mu_{[a,b]}\\
     \displaystyle \partial_\sigma \mu_{[a,b]} & \displaystyle \sigma^{-1} \big( \partial_\sigma \sigma_{[a,b]}^2 + 2\big(\mu_{[a,b]} - \mu\big) \partial_\sigma \mu_{[a,b]} \big)
\end{bmatrix},
\end{align*}
where $\mu_{[a,b]}$ and $\sigma_{[a,b]}^2$ are functions of $\theta=(\mu,\sigma)$ given resp. by the expectation and variance of $q_\theta$.
\end{proposition}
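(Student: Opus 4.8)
The plan is to exploit the identity linking the $\theta$-derivative of a truncated moment to an expectation of the corresponding score, which reduces every entry of $I_\theta^{[a,b]}$ to derivatives of $\mu_{[a,b]}$ and $\sigma_{[a,b]}^2$ without ever computing third or fourth truncated moments explicitly. First I would simplify the density: writing $Z_\theta := \int_a^b \exp(-(x-\mu)^2/(2\sigma^2))\,dx$, the factor $(\sqrt{2\pi}\sigma)^{-1}$ cancels between the numerator and $N_\theta$, so that $q_\theta(x) = Z_\theta^{-1}\exp(-(x-\mu)^2/(2\sigma^2))\1_{x\in[a,b]}$ and hence $\log q_\theta(x) = -\log Z_\theta - (x-\mu)^2/(2\sigma^2)$ on $[a,b]$. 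Because the truncation interval $[a,b]$ does not depend on $\theta$, the support condition of Section \ref{sec:information.geometry} holds and I may differentiate under the integral sign freely; this justifies both the moment-score identity
\begin{align*}
\partial_j\,\E_{X\sim Q_\theta}[T(X)] = \E_{X\sim Q_\theta}\big[T(X)\,\partial_j\log q_\theta(X)\big],
\end{align*}
valid for any $\theta$-independent statistic $T$, and the vanishing of the score, $\E_{X\sim Q_\theta}[\partial_j\log q_\theta]=0$.

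Second I would compute the two scores. Differentiating $\log Z_\theta$ recognizes the truncated moments: $\partial_\mu\log Z_\theta = \sigma^{-2}(\mu_{[a,b]}-\mu)$ and $\partial_\sigma\log Z_\theta = \sigma^{-3}\E[(X-\mu)^2]$, where the raw second moment satisfies $\E[(X-\mu)^2]=\sigma_{[a,b]}^2+(\mu_{[a,b]}-\mu)^2$. Substituting, the scores take the centred form
\begin{align*}
\partial_\mu\log q_\theta(x) = \frac{x-\mu_{[a,b]}}{\sigma^2}, \qquad \partial_\sigma\log q_\theta(x) = \frac{(x-\mu)^2-\E[(X-\mu)^2]}{\sigma^3},
\end{align*}
each manifestly of mean zero, which is a useful consistency check.

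Third, since the scores are centred, each FIM entry is a covariance, and I would evaluate them through the moment-score identity. For the $(\mu,\mu)$ entry, $\Cov(X,\partial_\mu\log q_\theta)=\sigma^{-2}\Var(X)=\sigma^{-2}\sigma_{[a,b]}^2$ equals $\partial_\mu\mu_{[a,b]}$, so $(I_\theta^{[a,b]})_{\mu\mu}=\sigma^{-4}\sigma_{[a,b]}^2=\sigma^{-2}\partial_\mu\mu_{[a,b]}$. For the off-diagonal term, writing $\partial_\mu\log q_\theta=(X-\mu_{[a,b]})/\sigma^2$ gives $(I_\theta^{[a,b]})_{\mu\sigma}=\Cov(\partial_\mu\log q_\theta,\partial_\sigma\log q_\theta)=\sigma^{-2}\Cov(X,\partial_\sigma\log q_\theta)=\sigma^{-2}\partial_\sigma\mu_{[a,b]}$. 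For the $(\sigma,\sigma)$ entry, applying the identity to $T=(X-\mu)^2$ (which is $\sigma$-independent) turns $\partial_\sigma\E[(X-\mu)^2]$ into $\sigma^{-3}\Var((X-\mu)^2)$, hence $(I_\theta^{[a,b]})_{\sigma\sigma}=\sigma^{-6}\Var((X-\mu)^2)=\sigma^{-3}\partial_\sigma\E[(X-\mu)^2]$; expanding $\E[(X-\mu)^2]=\sigma_{[a,b]}^2+(\mu_{[a,b]}-\mu)^2$ and differentiating in $\sigma$ yields $\sigma^{-3}\big(\partial_\sigma\sigma_{[a,b]}^2+2(\mu_{[a,b]}-\mu)\partial_\sigma\mu_{[a,b]}\big)$, which is exactly the claimed lower-right coefficient after factoring out $\sigma^{-2}$.

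The main obstacle is not the algebra, which the covariance trick keeps remarkably light, but rather the clean bookkeeping needed to avoid ever expanding the higher truncated moments of the Gaussian: the whole point is to keep $\Var((X-\mu)^2)$ packaged as a derivative of a second moment rather than unfolding it into a fourth-moment expression. A secondary point requiring care is the rigorous justification of differentiation under the integral sign, but since $[a,b]$ is fixed and the integrand is smooth with Gaussian decay in $x$, standard domination applies, so all interchanges are legitimate. Symmetry of $I_\theta^{[a,b]}$ is then automatic from the covariance representation, completing the identification with the stated matrix.
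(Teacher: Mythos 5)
Your proof is correct, and it takes a genuinely different route from the paper's. The paper derives the FIM from second mixed derivatives of the Kullback--Leibler divergence, $(I_\theta^{[a,b]})_{ij} = -\partial_{\theta^i}\partial_{\theta_0^j}\big[\mathcal D(q_\theta\,|\,q_{\theta_0})\big]_{\theta=\theta_0}$: it first computes $\mathcal D(q_\theta\,|\,q_{\theta_0})$ in closed form as $A_\theta + C_{\theta_0} + \log(\sqrt{2\pi}\sigma_0) + \big(\sigma_{[a,b]}^2+(\mu_{[a,b]}-\mu_0)^2\big)/(2\sigma_0^2)$, so that the $\theta$-derivatives of $\mu_{[a,b]}$ and $\sigma_{[a,b]}^2$ appear automatically upon differentiating. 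You instead work with the score directly: centring it via $\partial_\mu\log Z_\theta$ and $\partial_\sigma\log Z_\theta$, and converting each covariance of scores into a $\theta$-derivative of a truncated moment through the identity $\partial_j\,\mathbb E_{Q_\theta}[T(X)] = \mathbb E_{Q_\theta}[T(X)\,\partial_j\log q_\theta(X)]$ for $\theta$-independent $T$ (correctly noting that $T=(X-\mu)^2$ is only $\sigma$-independent, which is all the $(\sigma,\sigma)$ entry needs). Both routes land on the same expressions and both deliberately avoid unfolding third and fourth truncated moments. What the paper's detour through the divergence buys is reuse: the same function $D(\theta,\theta_0)$ is differentiated a third time to produce the dual Christoffel symbols $\Gamma_{ij,k}$ and ${}^*\Gamma_{ij,k}$ needed for the geodesic equation, whereas your argument is shorter and more self-contained if only the FIM is wanted. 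Your justification of differentiation under the integral sign (fixed truncation interval, smooth integrand with Gaussian domination) matches the regularity assumptions stated in the paper and is adequate.
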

The exact computation of the FIM coefficients and Christoffel symbols 
is detailed in the online supplementary materials. 
FR Spheres $\Lambda_\delta$ and $\delta$-perturbations in the truncated family $\mathcal N_{[-2,2]}$ are illustrated in Figure \ref{fig: truncated normal family}.

\begin{figure}[!ht]
     \centering
     \begin{tabular}{ll}
     \begin{minipage}{0.5\textwidth}
         \hspace{-0.5cm}\includegraphics[scale=0.22]{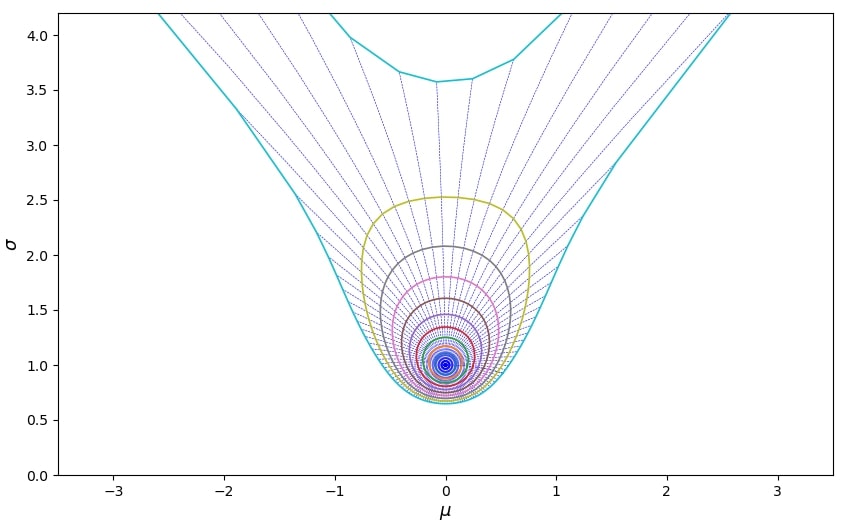}

    \footnotesize Concentric FR spheres $\Lambda_\delta$ centered at $\mathcal N_{[-2,2]}(0,1)$ with radii $\delta \leq 0.5$ in the $(\mu,\sigma)$ parameter space. 
    \end{minipage}
     & 
     \begin{minipage}{0.5\textwidth}
         \hspace{-0.5cm}\includegraphics[scale=0.15]{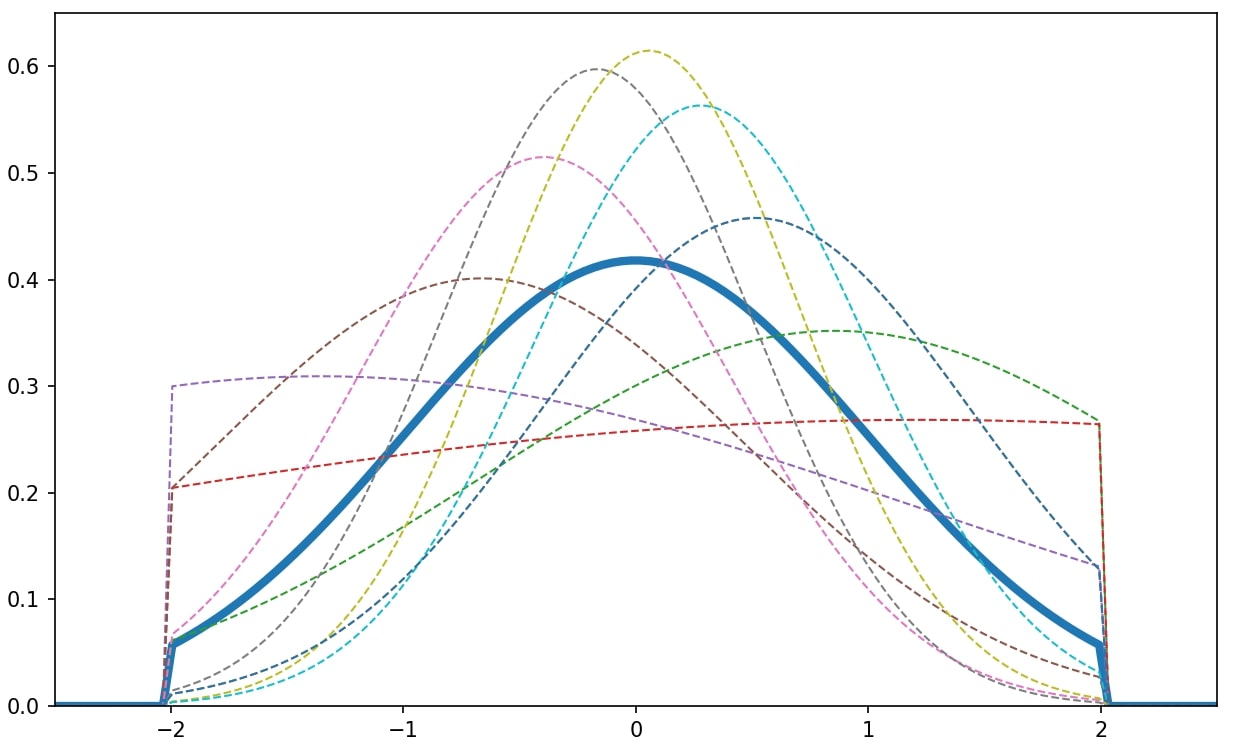}

         \footnotesize Some $\delta$-perturbations (in dashed) of the $\mathcal N_{[-2,2]}(0,1)$  density (in solid) i.e. elements of $\Lambda_\delta$ for $\delta=0.5$
    \end{minipage} \\
    \end{tabular}
    \caption{Illustration of the $\delta$-perturbation method on the truncated normal family.}
    \label{fig: truncated normal family}
\end{figure}

We observed that the spheres in the truncated family on the left of Figure \ref{fig: truncated normal family} are more distorted, compared to the spheres of the non-truncated normal family in Figure \ref{fig: non truncated normal family}. We also saw that, numerically for the truncated pdf $\mathcal N_{[-2,2]}(0,1)$, a radius $\delta$ larger than $0.5$ seems to give a non-compact FR sphere. This surprising phenomenon is discussed in Section \ref{appendix: geodesics and Fisher spheres} of the Appendix in a more general context. 

Furthermore, on the right of both figures, we show the $\delta$-perturbations densities from the nominal density, given a value for $\delta$. They represent distributions that are in the vicinity of the latter for the FR distance. For the truncated pdfs, the $\delta$~- perturbations of $\mathcal N_{[-2,2]}(0,1)$ are quite distinct: some have been affected by the truncation procedure, while not much for others. In addition, some have flattened and are close to the uniform distribution on $[-2,2]$.

\section{Illustration of the density perturbation method and Fisher-Rao spheres} \label{sec: Fisher-Rao spheres illustration}

For the sake of clarity, we illustrate here the nature of $\delta$-perturbations for  some well known parametric families, in usual and truncated situations. These $\delta$-perturbations are obtained by computing the FR spheres $\Lambda_\delta$ in $\mathcal P_i=\{f_{i,\theta}\}_{\theta\in\Theta}$, that constraints the optimization problem $(\star)$. Let us first state the following classical result from Riemannian geometry.
\begin{proposition}[\cite{do1992riemannian}, page 65] \label{prop: geod spheres}
Let $(M,g)$ be a Riemannian manifold. The sphere $\mathcal S(p,\delta)$ for the geodesic distance centered at $p \in M$ with small enough radius $\delta>0$ can be identified as
    $$\mathcal{S}(p,\delta)=\big\{\gamma(1) \ | \ \gamma \text{ geodesic s.t. } \gamma(0)=p, \dot \gamma(0)=v \text{ and } |v|_p=\delta \big\}.$$
     
\end{proposition}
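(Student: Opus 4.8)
The plan is to reduce the statement to two standard facts about the exponential map: that it is a local diffeomorphism near the origin, and that inside a small enough normal ball it realizes the geodesic distance radially (the Gauss lemma). First I would introduce the exponential map $\exp_p : T_p M \to M$, defined by $\exp_p(v) = \gamma_v(1)$, where $\gamma_v$ is the unique geodesic with $\gamma_v(0) = p$ and $\dot\gamma_v(0) = v$. Since geodesics have constant speed (as already noted after (\ref{geodesic equation})), the length of $\gamma_v$ on $[0,1]$ equals $\int_0^1 |\dot\gamma_v(\tau)|_{\gamma_v(\tau)}\, d\tau = |v|_p$. Hence the right-hand side of the claimed identity is exactly the image $\exp_p(\Sigma_\delta)$ of the tangent sphere $\Sigma_\delta := \{v \in T_p M : |v|_p = \delta\}$ under $\exp_p$, and the whole statement amounts to showing $\mathcal S(p,\delta) = \exp_p(\Sigma_\delta)$ for $\delta$ small.

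Next I would invoke the inverse function theorem: because the differential $d(\exp_p)_0$ is the identity on $T_p M$, there exists $\varepsilon>0$ such that $\exp_p$ restricts to a diffeomorphism from the open ball $\{v : |v|_p < \varepsilon\}$ onto an open neighborhood $U$ of $p$, a so-called normal ball. The decisive input is then the Gauss lemma (see page 69 in \cite{do1992riemannian}): for $q = \exp_p(v)$ with $|v|_p < \varepsilon$, the radial geodesic $\tau \mapsto \exp_p(\tau v)$ is strictly minimizing among all piecewise-smooth curves joining $p$ to $q$, so that $d(p,q) = |v|_p$. Taking $\delta < \varepsilon$, this gives $d(p, \exp_p(v)) = \delta$ for every $v \in \Sigma_\delta$, i.e. $\exp_p(\Sigma_\delta) \subseteq \mathcal S(p,\delta)$.

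For the reverse inclusion I would take any $q$ with $d(p,q) = \delta < \varepsilon$. The minimizing-distance identity above shows that the geodesic ball $B(p,\varepsilon) = \exp_p(\{v : |v|_p < \varepsilon\})$ and that $d(p,\cdot) = |\exp_p^{-1}(\cdot)|_p$ throughout this ball; hence $q$ lies in $U$ and its preimage $v := \exp_p^{-1}(q)$ satisfies $|v|_p = d(p,q) = \delta$, so $v \in \Sigma_\delta$ and $q = \exp_p(v)$ has the required form. Combining the two inclusions yields $\mathcal S(p,\delta) = \exp_p(\Sigma_\delta)$, which is exactly the stated description.

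I expect the main obstacle to be the Gauss lemma itself, or rather its consequence that radial geodesics are genuinely distance-minimizing inside a normal ball: this is what simultaneously forbids a shorter non-radial path to $\exp_p(v)$ and guarantees that no point closer than $\delta$ is reached along $\Sigma_\delta$, pinning $\mathcal S(p,\delta)$ exactly onto $\exp_p(\Sigma_\delta)$. The qualifier ``small enough radius'' is precisely the threshold $\varepsilon$ beyond which $\exp_p$ may fail to be injective or geodesics cease to minimize (past the cut locus), so the argument is intrinsically local and I would make the hypothesis $\delta < \varepsilon$ explicit. Since all these ingredients are classical and quoted from \cite{do1992riemannian}, the write-up reduces to assembling them rather than reproving them.
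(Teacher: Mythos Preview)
Your proposal is correct and follows the standard textbook argument (exponential map, inverse function theorem, Gauss lemma). Note, however, that the paper does not give its own proof of this proposition: it is stated as a classical result with a citation to \cite{do1992riemannian}, page 65, and is used as a black box to justify the sphere-discretization algorithm. The Appendix (Section \ref{appendix: geodesics and Fisher spheres}) does revisit the exponential map and the injectivity radius to explain \emph{when} the identification can fail for large $\delta$, which is consistent with the ingredients you invoke, but this is elaboration on the hypothesis ``small enough radius'' rather than a proof of the proposition itself. Your write-up is thus more detailed than anything the paper provides and matches the approach one would find in the cited reference.
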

\textcolor{black}{This result is not always true for large radii, see Appendix \ref{appendix: geodesics and Fisher spheres} for more details and Figure \ref{fig: truncated normal family} for an illustration of a possible outcome.} However, Proposition \ref{prop: geod spheres} allows us to discretize small geodesic spheres through the following algorithm, valid in any Riemannian manifold, where $K$ is the number of discretization points 
\texttt{
\begin{enumerate}
    \item pick $K$ tangent vectors $v_1,...,v_K$ at point $p\in M$ with norm $\delta$,
    \item solve the geodesic equation (\ref{geodesic equation}) $K$ times:
$$\ddot \gamma^k + \dot \gamma^i \bar\Gamma_{ij}^k \dot \gamma^j = 0, \ \ \text{for all } k=1,...,\dim(M),$$
 with initial conditions $\gamma_0=p$ and $\dot \gamma_0=v_\ell$, for $\ell\in \{1,...,K\}$, and denote $\bar \gamma_{v_\ell}$ the numerical approximation,
 \item define the discretized sphere as the collection of points $\{\bar \gamma_{v_\ell}(1)\}_{1\leq \ell\leq K}.$
\end{enumerate}
}
Hence for a parametric family $\mathcal P_i=\{f_{i,\theta}\}_{\theta\in\Theta}$, we can apply this algorithm in the coordinate chart $\Theta$ to obtain $K$ points $\theta_\ell\in \Theta$ such that $\{f_{i,\theta_\ell}\}_{\ell=1,...,K}$ discretizes the sphere $\Lambda_\delta$ in $\mathcal P_i$.
In practice, since we mainly consider two-parameter families, the $K$ tangent vectors $v_\ell$ with norm $\delta$ are taken at uniform angle i.e. proportional to the vectors $\big(\cos\left(2\pi \ell/K\right),\sin\left(2\pi \ell/K\right)\big)^\intercal$, $\ell=1,\ldots,K$. 

For each two-parameter family considered below, 
we computed the FIM and the Christoffel symbols $\bar \Gamma_{ij}^k$ (\ref{Christoffel symbols}), and applied Euler method to solve the geodesic equation (\ref{geodesic equation}). This  slightly differs from the Hamiltonian method used in \cite{Gauchy2022}. When the Christoffel symbols are not known explicitly (Gumbel and Gamma), we approximated them using (basic) numerical integration and differentiation methods.

\subsection{Triangular distributions }\label{sec:triangular.dist}
The family $\mathcal T_{[a,b]}$ of triangular distributions supported on $[a,b]$ contains the densities $q_m$ given by 
$$q_m(x)= 2(b-a)^{-1}\left[(x-a)(m-a)^{-1}\textbf{1}_{x\in [a,m]} + (b-x)(b-m)^{-1}\textbf{1}_{x\in (m,b]}\right],\ \ \ m\in [a,b].$$
The Fisher information is given by
$i_m = [(b-m)(m-a)]^{-1}$  for $m \in (a,b)$.
Since $\mathcal T_{[a,b]}$ is a one parameter family, the FR distance $d$ can be easily computed:
$$d(q_{m_0},q_{m_1}) = \left|\int_{m_0}^{m_1} \sqrt{i_m} \ dm\right| = \left| \arcsin\left(\frac{m_1-\alpha}{\sqrt{\beta}}\right) - \arcsin\left(\frac{m_0-\alpha}{\sqrt{\beta}}\right)\right|, $$
where $\alpha:=\frac{a+b}{2}$ and $\beta:=\left(\frac{a-b}{2}\right)^2$. This allows to explicitly compute the FR spheres (see Appendix \ref{appendix: proof of exponential and triangular}).
Note that the radius cannot be too large as $\mathcal T_{[a,b]}$ has finite diameter
$$ \text{diam}(\mathcal T_{[a,b]}) := \sup_{m_0,m_1 \in [a,b]} d(q_{m_0},q_{m_1}) = |\arcsin(-1)-\arcsin(1)| = \pi.$$
Hence, the radius $\delta$ satisfies $\delta\leq \delta_{\max}=\frac{\pi}{2}$. 
Figure \ref{fig:triang perturbation} gives an illustration. 

\begin{figure}[!ht]
     \centering
     \begin{tabular}{ll}
     \begin{minipage}{0.5\textwidth}
         \includegraphics[scale=0.135]{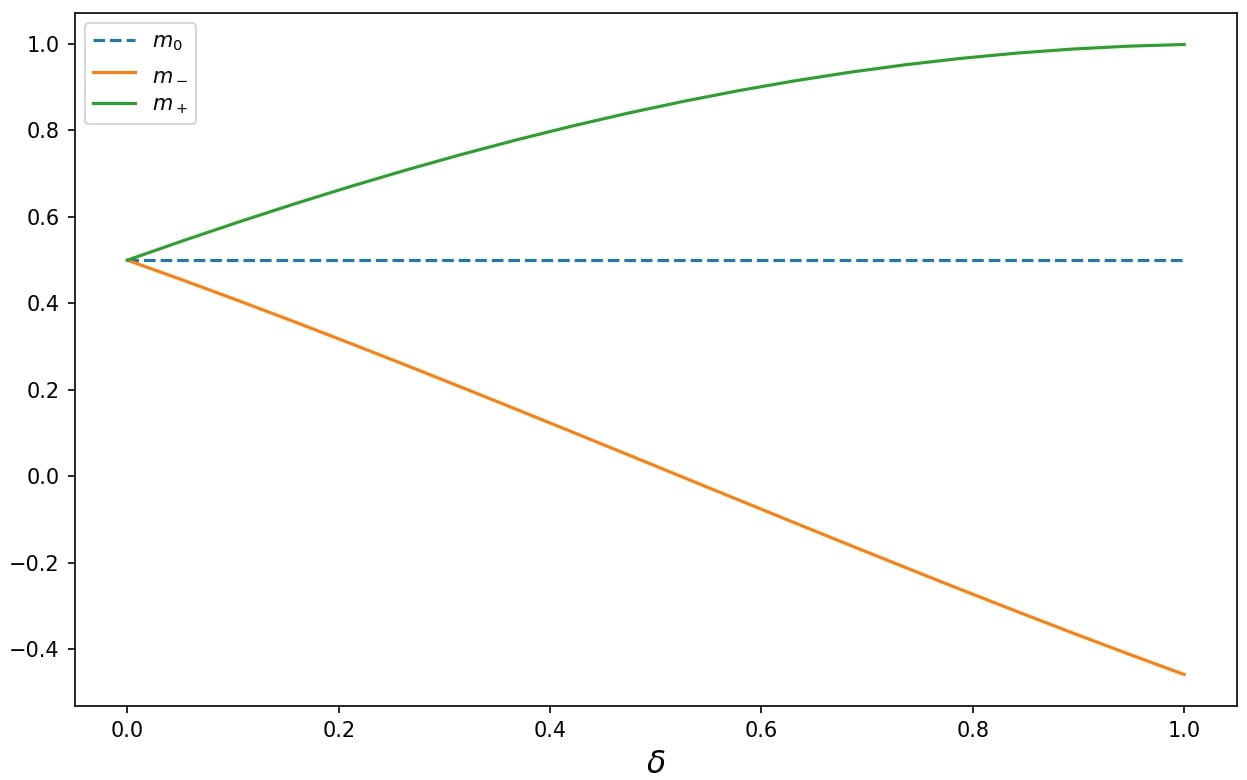}

    \footnotesize Evolution as a function of $\delta$ of points $m_-,m_+$ on the sphere centered at $m_0=0.5$ with radius $\delta$.
    \end{minipage}
     & 
     \begin{minipage}{0.5\textwidth}
         \vspace{0.25cm}
         \hspace{-0.5cm}\includegraphics[scale=0.205]{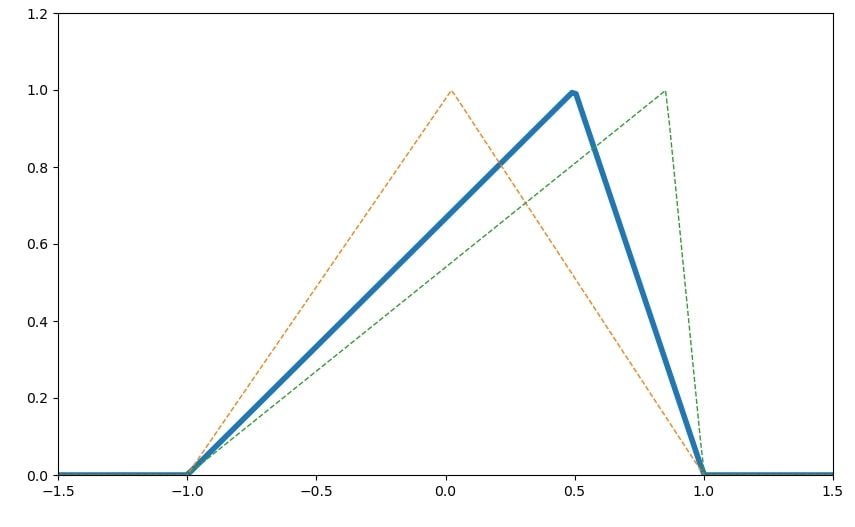}
         
         \footnotesize The $\delta$-perturbations (in dashed lines) of the $q_{0.5}$ density (in solid lines) in $\mathcal T_{[-1,1]}$ for $\delta=0.5$.
         
    \end{minipage} \\
    \end{tabular}
    \caption{The $\delta$-perturbations in the triangular family $\mathcal T_{[-1,1]}$.}
    \label{fig:triang perturbation}
\end{figure}

The family $\mathcal T_{[a,b]}$, being a (connected\footnote{A connected manifold is a set that is not formed by two or more disjoint sets i.e. it is in one piece.}) $1$-dimensional Riemannian manifold, is isometric\footnote{Two spaces are isometric if there exists a smooth invertible map between them that preserves the Riemannian structure.} to an Euclidean (straight) interval, of which there exists three types: a half line $[0,\infty)$, a finite interval $[0,\ell]$ (for some positive $\ell\in \mathbb R$) or the whole line $\mathbb R$. The family $\mathcal T_{[a,b]}$ corresponds to $[0,\pi]$ since it has a diameter of $\pi$.

\subsection{Location-scale families}\label{sec: loc-scale families}
Location-scale families are two parameter families obtained by translating and scaling an initial smooth probability density $p: \mathbb R \to (0,\infty)$. Let $m\in\mathbb R$ and $s>0$, then
$$ p_\theta(x) = \frac{1}{s} p\left(\frac{x-m}{s}\right), \ \ \ \ \theta=(m,s).$$
Here, $m$ and $s$ are resp. the location and scale parameters. 
Many well known families fall in this model. For instance, this is the case for the normal, Gumbel and Cauchy distributions. The following proposition shows that all location-scale families share a common geometry. 

\begin{proposition}\label{prop: loc scale FIM}
    The FIM of a location-scale family is given by
    $I_\theta = \begin{bmatrix}
        \alpha/s^2 & \gamma/s^2 \\
        \gamma/s^2 & \beta/s^2 
    \end{bmatrix},$
    where $\alpha,\beta,\gamma$ do not depend on $\theta=(m,s)$ \textcolor{black}{and are given by integral expressions involving $p$ and $p'.$ }
    In addition, there exists a linear reparameterization with Jacobian matrix $P^\intercal$ such that
    $$I_\theta = P\cdot \left(\frac{1}{s^2} \begin{bmatrix}
        1 & 0 \\
        0 & 1
    \end{bmatrix}\right) \cdot P^\intercal.$$
    Hence, we find that the Fisher metric of location-scale families is nothing more than the Poincaré metric.
\end{proposition}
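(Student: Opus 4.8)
The plan is to compute the score in the two parameters, reduce each Fisher integral to an integral against the standard density $p$ by the substitution $u=(x-m)/s$, and then recognise the resulting matrix as a constant rescaling of a Poincaré metric. First I would write $\log p_\theta(x) = -\log s + \log p(u)$ with $u=(x-m)/s$, and differentiate using $\partial_m u = -1/s$ and $\partial_s u = -u/s$. Setting $\psi:=(\log p)'=p'/p$, this yields the two score components
\[
\partial_m \log p_\theta(x) = -\tfrac1s\,\psi(u),\qquad \partial_s \log p_\theta(x) = -\tfrac1s\bigl(1+u\,\psi(u)\bigr).
\]

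Substituting $u=(x-m)/s$ in the Fisher integrals (\ref{eq:coord Fisher}) then turns $\mathbb E_{X\sim P_\theta}$ into an integral against $p(u)\,du$ and factors out a common $1/s^2$, so that $I_\theta = s^{-2}A$ with
\[
A=\begin{bmatrix}\alpha & \gamma\\ \gamma & \beta\end{bmatrix},\quad \alpha=\int \frac{(p')^2}{p}\,du,\quad \gamma=\int u\,\frac{(p')^2}{p}\,du,\quad \beta=\int u^2\,\frac{(p')^2}{p}\,du-1,
\]
where the constant $-1$ in $\beta$ comes from the integration by parts $\int u\,p'\,du=-1$, whose boundary terms vanish under the standing regularity assumptions. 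Since $\alpha,\beta,\gamma$ do not depend on $\theta$, this already proves the first assertion.

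For the factorization I would invoke the regularity conditions of Section~\ref{sec:information.geometry}, which make $I_\theta$, hence the constant matrix $A$, symmetric positive definite; in particular $\alpha>0$ and $\det A=\alpha\beta-\gamma^2>0$. The Cholesky factorization then produces a constant lower-triangular matrix $P$ with $A=PP^\intercal$, and since $s^{-2}$ is a scalar we obtain immediately
\[
I_\theta = s^{-2}PP^\intercal = P\bigl(s^{-2}I_2\bigr)P^\intercal,
\]
which is the stated identity; equivalently, the linear reparameterization $\tilde\theta := P^\intercal\theta$, whose Jacobian is the constant matrix $P^\intercal$, brings the Fisher metric into the form $s^{-2}I_2$.

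The only delicate point, and the one I would treat with care, is why $s^{-2}I_2$ is genuinely a Poincaré metric rather than merely a convenient matrix factorization. Because $P$ is lower-triangular, $P^\intercal$ is upper-triangular, so the new scale coordinate $\tilde s$ is a pure rescaling of $s$ and does not blend in the location $m$; hence the conformal factor $s^{-2}$ becomes $\tilde s^{-2}$ up to a positive constant, and the metric reads $\kappa\,\tilde s^{-2}(d\tilde m^2+d\tilde s^2)$ for some $\kappa>0$, that is, a half-plane metric of constant negative curvature. Had the square root been chosen so as to mix $m$ into the scale direction, the factor $s^{-2}$ would no longer be a function of the new scale alone and the Poincaré form would be lost; this is the main obstacle in passing from the algebraic identity to the asserted geometric conclusion. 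The FIM computation itself is otherwise routine, the only things to verify being convergence of the three defining integrals and the vanishing of the boundary terms used above.
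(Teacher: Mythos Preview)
Your proof is correct and follows the paper's overall strategy: compute the score, reduce the Fisher integrals by the substitution $u=(x-m)/s$ to obtain $I_\theta=s^{-2}A$ with $A$ constant, then take a square root of $A$. Your integral expressions for $\alpha,\beta,\gamma$ agree with the paper's after expanding the square in $\beta=\int(yp'/p+1)^2p\,dy$ and using $\int yp'=-1$.

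The one genuine difference is the choice of square root. The paper diagonalises $A$ via the spectral theorem, writing $A=Q\,\mathrm{diag}(\lambda_1,\lambda_2)\,Q^\intercal$ and setting $P=Q\,\mathrm{diag}(\sqrt{\lambda_1},\sqrt{\lambda_2})$; you instead take the Cholesky factor. Your choice is arguably cleaner for the geometric conclusion: since the Cholesky $P$ is lower-triangular, the linear map $\tilde\theta=P^\intercal\theta$ sends $s$ to a pure rescaling $\tilde s=P_{22}s$, so $s^{-2}$ becomes a constant times $\tilde s^{-2}$ and the Poincar\'e form is immediate. With the paper's orthogonal $Q$, the reparameterisation generally mixes $m$ into the new scale direction, and one implicitly needs a further Euclidean rotation to align the half-plane boundary with an axis; the paper does not spell this out, whereas you do. Both routes are valid, but your triangular factor makes the passage from the algebraic identity $I_\theta=P(s^{-2}I_2)P^\intercal$ to the Poincar\'e metric more transparent.
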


The proof of Proposition \ref{prop: loc scale FIM} is based on the spectral theorem. 
We also refer to \cite{komaki2007bayesian} for a proof dealing with an even density $p:\mathbb R \to (0,\infty)$. Up to our knowledge, concluding the same for truncated location-scale families is an open problem.

Numerical computations of FR spheres for the Gumbel family in the truncated ($\text{Gumb}_{[a,b]}$) and non-truncated ($\text{Gumb}$) situations, as well as the $\delta$-perturbations for both, are illustrated in Figure \ref{fig:Truncated and non truncated gumbel spheres and perturbations}.

\begin{figure}[!ht]
     \centering
     \begin{tabular}{ll}
     \begin{minipage}{0.5\textwidth}
         \includegraphics[width=1\textwidth]{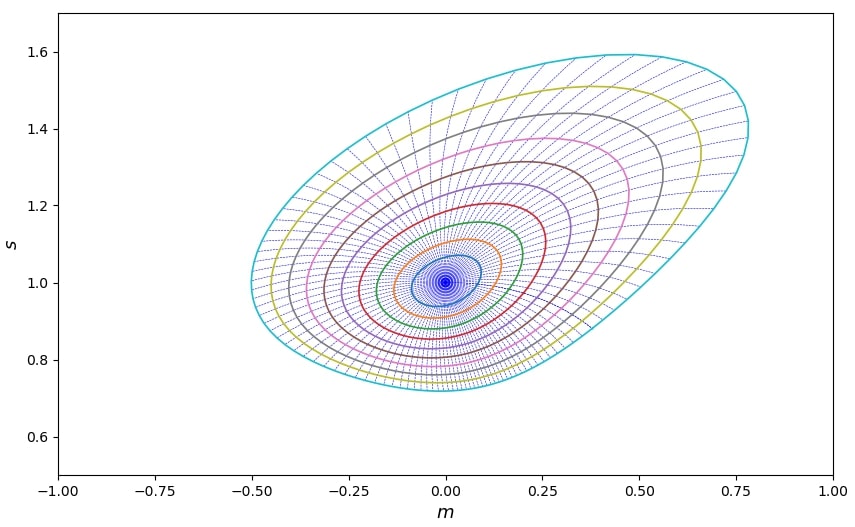}
       
         {\footnotesize Non-truncated case: concentric FR spheres centered at $\text{Gumb}(0,1)$ with maximal radius $\delta=0.5$ in the $(m,s)$ parameter space.}
             \label{fig:gumbel sphere non tr}
    \end{minipage}
    & 
     \begin{minipage}{0.5\textwidth}
         \includegraphics[width=1\textwidth]{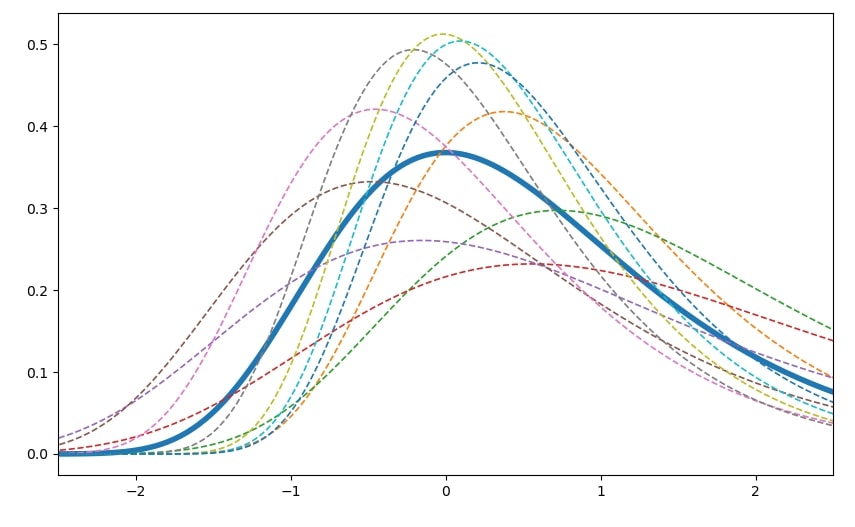}

         {\footnotesize Non-truncated case: some $\delta$-perturbations (in dashed lines) of the $\text{Gumb}(0,1)$ density for $\delta = 0.5$.}
         \label{fig:graph gumb (0,1) densities}
    \end{minipage} \\
    & \\
     \begin{minipage}{0.5\textwidth}
         \includegraphics[width=1\textwidth]{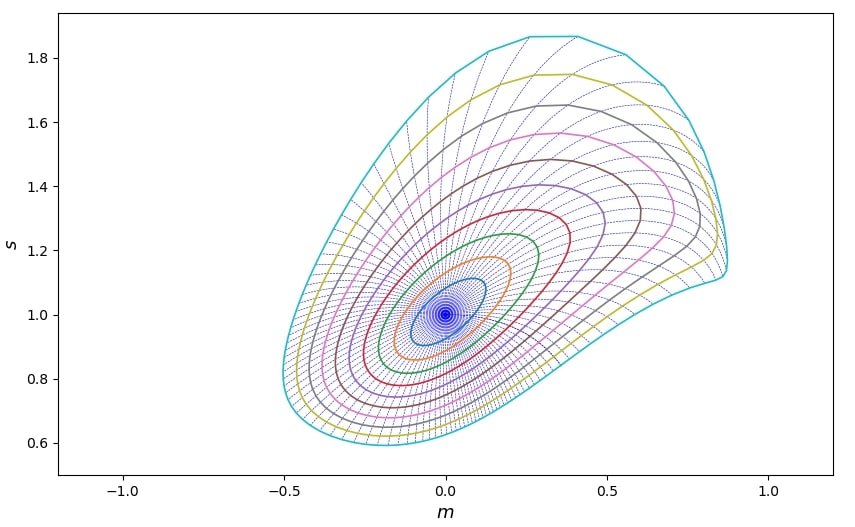}

         {\footnotesize Truncated case: concentric FR spheres centered at $\text{Gumb}_{[-2,2]}(0,1)$ with maximal radius $\delta=0.5$ in the $(m,s)$ parameter space.}
             \label{fig:gumbel sphere tr}
    \end{minipage}
     & 
     \begin{minipage}{0.5\textwidth}
         \includegraphics[width=1\textwidth]{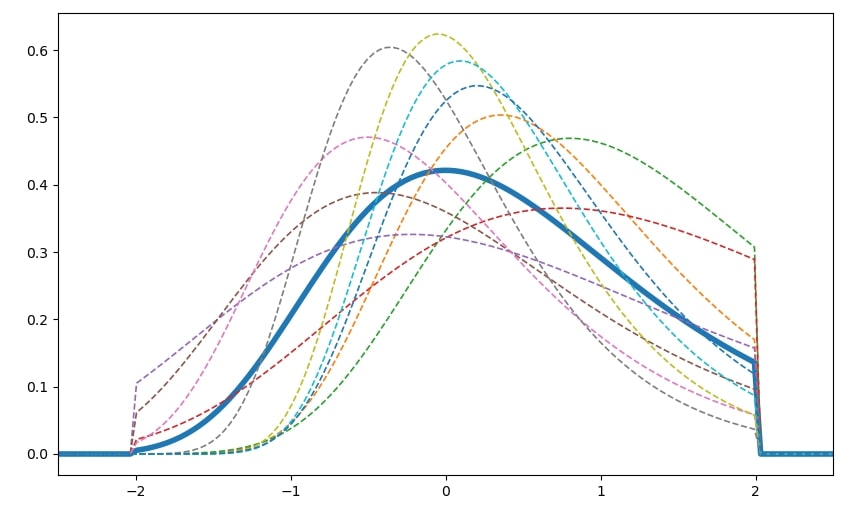}

         {\footnotesize Truncated case: some $\delta$-perturbations (in dashed lines) of the truncated $\text{Gumb}_{[-2,2]}(0,1)$ density for $\delta=0.5$.}
         \label{fig:graph tr gumb (0,1) densities}
    \end{minipage}
    \end{tabular}
    \caption{FR spheres and $\delta$-perturbations in the usual and truncated case (on $[-2,2]$) for the Gumbel family. 
    The concentric spheres in the upper left figure look like a linear transformation of the concentric spheres in the usual normal family from Figure \ref{fig: non truncated normal family}, thus illustrating the claim of Proposition \ref{prop: loc scale FIM}.}
        \label{fig:Truncated and non truncated gumbel spheres and perturbations}
\end{figure}

\subsection{Bijective push-forward of a parametric family}

Here, we work with a family of probability distributions obtained by pushing forward a reference family $\mathcal P :=\{P_\theta\}_{\theta \in \Theta}$ on some space $\mathcal X$ to $\mathcal Y$ by a smooth invertible map $$h:\mathcal X \to \mathcal Y.$$ 
For instance, if we take $h(x) = e^x$ and $\mathcal P$ to be the family of normal distributions, then $h_\#\mathcal P := \{h_\# P_\theta\}_{ \theta\in \Theta}$ becomes the log-normal family. Here $h_\# P_\theta$ denotes
the push-forward distribution. Set 
$$R_\theta := h_\# P_\theta$$
and let $\mathcal R:= \{R_\theta\}_{\theta \in \Theta}$ be the push-forward family. Our goal is to determine the FIM of a truncated version of $\mathcal R$ on some subset of $\mathcal Y$. Let us first remind the following theorem
\begin{theorem}[\cite{calin2014geometric}, page 25] If $\mathcal X,\mathcal Y$ are open subsets of $\mathbb R^d$ and $h:\mathcal X \to \mathcal Y$ is smooth and invertible, then for any regular parametric family $\mathcal P$ on $\mathcal X$ the pushforward family $\mathcal R:=h_\#\mathcal P$ has the same FIM as $\mathcal P$.
\end{theorem}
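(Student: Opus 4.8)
The plan is to reduce everything to the change-of-variables formula for densities and to the fact that the score function transforms covariantly while the Jacobian of $h$ enters only as a $\theta$-independent weight. Let $\mathcal P = \{P_\theta\}_{\theta\in\Theta}$ have densities $p_\theta$ with respect to Lebesgue measure on $\mathcal X$. Since $h$ is smooth and invertible between open subsets of $\mathbb R^d$, it is a diffeomorphism, so $h^{-1}$ is smooth and its Jacobian determinant $\det J_{h^{-1}}(y)$ is well defined and nonvanishing. First I would write the density $r_\theta$ of $R_\theta = h_\#P_\theta$ with respect to Lebesgue measure on $\mathcal Y$ by the standard change-of-variables formula,
$$r_\theta(y) = p_\theta\big(h^{-1}(y)\big)\,\big|\det J_{h^{-1}}(y)\big|.$$

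The decisive observation is that the factor $|\det J_{h^{-1}}(y)|$ depends only on the map $h$ and not on the parameter $\theta$. Taking logarithms gives $\log r_\theta(y) = \log p_\theta(h^{-1}(y)) + \log|\det J_{h^{-1}}(y)|$, and differentiating with respect to $\theta^i$ annihilates the second summand. This yields the score identity
$$\partial_i \log r_\theta(y) = \partial_i \log p_\theta\big(h^{-1}(y)\big), \qquad i = 1,\dots,d,$$
i.e. the score of the pushforward family at $y$ coincides with the score of $\mathcal P$ evaluated at the preimage $h^{-1}(y)$.

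It then remains to compute the FIM of $\mathcal R$ and to transport the expectation back to $\mathcal X$. Starting from the definition (\ref{eq:coord Fisher}),
$$\big(I^{\mathcal R}_\theta\big)_{ij} = \mathbb E_{Y\sim R_\theta}\big[\partial_i \log r_\theta(Y)\,\partial_j \log r_\theta(Y)\big],$$
I would use the defining property of the pushforward, namely $\mathbb E_{Y\sim R_\theta}[\phi(Y)] = \mathbb E_{X\sim P_\theta}[\phi(h(X))]$, to replace $Y$ by $h(X)$ and integrate against $P_\theta$. Applying the score identity with $h^{-1}(h(X)) = X$ turns each factor $\partial_i \log r_\theta(h(X))$ into $\partial_i \log p_\theta(X)$, so the integral collapses to $\mathbb E_{X\sim P_\theta}[\partial_i \log p_\theta(X)\,\partial_j \log p_\theta(X)] = (I^{\mathcal P}_\theta)_{ij}$. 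Hence $I^{\mathcal R}_\theta = I^{\mathcal P}_\theta$ for every $\theta$, which is the claim.

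I expect no deep obstacle: the argument is essentially bookkeeping around the change of variables, and the only points requiring care are the justification of the density formula together with the nonvanishing of the Jacobian (both guaranteed by $h$ being a diffeomorphism of open subsets of $\mathbb R^d$), and the verification that the regularity conditions making the FIM well defined are inherited by $\mathcal R$. The latter holds because the entire $\theta$-dependence of $r_\theta$ is carried by $p_\theta\circ h^{-1}$, the Jacobian being an inert weight, so smoothness in $\theta$ and the interchange of differentiation and expectation transfer directly from $\mathcal P$. Finally, for the intended application to a truncated version of $\mathcal R$ on a subset $B\subset\mathcal Y$, I would note that truncation and pushforward commute: conditioning $R_\theta$ on $B$ gives the pushforward by $h$ of $P_\theta$ conditioned on $h^{-1}(B)$, so the same identity applied to the truncated families yields the corresponding equality of Fisher information matrices.
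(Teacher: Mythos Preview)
Your proof is correct and is exactly the standard argument for this classical result: the change-of-variables density formula isolates the $\theta$-dependence in $p_\theta\circ h^{-1}$, the Jacobian factor is $\theta$-inert so the scores coincide, and the transfer of expectation under pushforward collapses the integral to the original FIM.

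Note, however, that the paper does not supply its own proof of this theorem. It is stated with a citation to the referenced textbook and then used as a black box. So there is no paper-side argument to compare against for the theorem itself. What the paper \emph{does} prove is the subsequent Lemma on truncated pushforward families, and your final paragraph is essentially that lemma. The paper's route there is the same as yours in spirit: it first checks directly on measurable sets that $(h_\# P_\theta)^B = h_\#(P_\theta^A)$ (truncation and pushforward commute), and then invokes the cited theorem applied to $h$ restricted to $A = h^{-1}(B)$ to conclude $I_\theta^A = K_\theta^B$. Your sketch reaches the same conclusion by the same mechanism, so on the one part the paper actually proves, you are aligned with it.
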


This result directly implies that geodesics and FR spheres in $\mathcal P$ are the same in $\mathcal R$ (i.e., they are isometric). As an example, this holds for the normal and the log-normal families.

Now, we would like to compute the FIM of a truncated version of $\mathcal R$. Let us first fix some notations. From now on, we denote $B\subset \mathcal Y$ the truncation domain (a measurable subset) for $\mathcal R$ and  $A:= h^{-1}(B)\subset \mathcal X$ its preimage. 
We denote the truncated family of $\mathcal P$ on $A$ (resp. of $\mathcal R$ on $B$) as $\mathcal P_A :=\{P_\theta^A\}_{\theta \in \Theta}$
(resp. $\mathcal R_B :=\{R_\theta^B\}_{\theta \in \Theta}$). 
In addition, let $I_\theta^A$ (resp. $K_\theta^B$) denotes the FIM of $\mathcal P_A$ (resp. of $\mathcal R_B$).

Since we want to compute the FIM of $\mathcal R_B=\{(h_\#P_\theta)^B\}_{\theta \in \Theta}$, if we apply the previous theorem on $\mathcal P_A=\{P_\theta^A\}_{\theta \in \Theta}$ with the restriction of $h$ to $A$, we get that $\mathcal P_A$ and 
\begin{align*}\label{eq: pushforward P by h}
h_\#\mathcal{P}_A := \left\{h_\#\left(P_\theta^A\right)\right\}_{\theta \in \Theta}
\end{align*}
have the same FIM. 

The following result states that $h_\#\mathcal P_A$ and $\mathcal R_B$ are the same, %
the proof being obvious and intuitive. 
\begin{lemma}\label{lemma: push forward}
Under the previous notations, it comes  $$(h_\#P_\theta)^B = h_\#(P_\theta^A), \ \ \ \ \text{for all $\theta$ in $\Theta$.}$$
Therefore $\mathcal R_B$ and $h_\#\mathcal P_A$ are the same family.
Moreover, $\mathcal R_B$ and $\mathcal P_A$ have the same Riemannian structure since their FIM coincide:
$$I_\theta^A = K_\theta^B, \ \ \ \ \text{for all $\theta$ in $\Theta$.}$$
\end{lemma}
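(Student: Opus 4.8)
The plan is to reduce the statement to a single measure-theoretic identity, namely $(h_\#P_\theta)^B = h_\#(P_\theta^A)$ for each $\theta$, proved by evaluating both measures on an arbitrary test set. Once this holds for every $\theta$, the equality of the families $\mathcal R_B = h_\#\mathcal P_A$ is immediate, and the equality of Fisher matrices follows by applying the invariance Theorem (\cite{calin2014geometric}, page 25) to the restriction of $h$ to $A$.

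First I would fix an arbitrary measurable set $S \subset \mathcal Y$ and recall the two defining relations at play: the pushforward satisfies $(h_\#\nu)(S) = \nu(h^{-1}(S))$ for any measure $\nu$ on $\mathcal X$, and truncation of a measure $\rho$ to a set $C$ is $\rho^C(\cdot) = \rho(\cdot \cap C)/\rho(C)$. The single algebraic fact driving the whole argument is that, because $A := h^{-1}(B)$ and $h$ is a bijection,
$$h^{-1}(S \cap B) = h^{-1}(S) \cap h^{-1}(B) = h^{-1}(S) \cap A.$$

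With this in hand I would compute both sides. For the left-hand side, using the displayed set identity together with $h^{-1}(B) = A$ in the denominator,
$$(h_\#P_\theta)^B(S) = \frac{(h_\#P_\theta)(S \cap B)}{(h_\#P_\theta)(B)} = \frac{P_\theta(h^{-1}(S) \cap A)}{P_\theta(A)}.$$
For the right-hand side,
$$h_\#(P_\theta^A)(S) = P_\theta^A(h^{-1}(S)) = \frac{P_\theta(h^{-1}(S) \cap A)}{P_\theta(A)}.$$
The two expressions coincide for every $S$, giving $(h_\#P_\theta)^B = h_\#(P_\theta^A)$; in particular $R_\theta^B = h_\#(P_\theta^A)$ for all $\theta$, so $\mathcal R_B = \{R_\theta^B\}_\theta = \{h_\#(P_\theta^A)\}_\theta = h_\#\mathcal P_A$ as families.

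For the Fisher matrices, I would apply the invariance Theorem to the smooth invertible restriction $h|_A : A \to B$ and the regular family $\mathcal P_A$, which yields that $h_\#\mathcal P_A$ has the same FIM as $\mathcal P_A$; since $\mathcal R_B = h_\#\mathcal P_A$, this reads $K_\theta^B = I_\theta^A$. The computation is entirely routine, so I do not expect a genuine obstacle; the only point requiring mild care is verifying that the hypotheses of the invariance Theorem transfer to the truncated setting, namely that $h|_A$ inherits smoothness and invertibility as a restriction and that $\mathcal P_A$ remains a regular parametric family (the support $A$ is independent of $\theta$, and smoothness in $\theta$ together with interchangeability of integration and differentiation are preserved under truncation by a $\theta$-independent set). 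Once these are noted, the conclusion $I_\theta^A = K_\theta^B$ follows.
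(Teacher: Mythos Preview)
Your proof is correct and follows essentially the same route as the paper: evaluate both measures on an arbitrary test set using $h^{-1}(S\cap B)=h^{-1}(S)\cap A$, and then invoke the pushforward invariance theorem on $h|_A$ to conclude equality of the FIMs. One small remark: the set identity $h^{-1}(S\cap B)=h^{-1}(S)\cap h^{-1}(B)$ holds for any map $h$, so bijectivity is not needed for the first assertion (the paper notes this explicitly).
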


This result allows us to deduce $K_\theta^B$, the FIM of the truncated push-forward family $\mathcal R_B$ on $\mathcal Y$, if we know $I_\theta^A$, the FIM of the initial truncated family $\mathcal P_A$ on $\mathcal X$ where $A =h^{-1}(B)$, see Example \ref{ex: log-normal FIM from normal}. The following diagram summarizes our considerations:
\begin{center}
\begin{tikzcd}
(\mathcal X, P_\theta) \arrow[r, "\text{tr. on $A$}"] \arrow[d,  "\ h_\#"]    & (\mathcal X, P_\theta^A) \arrow[d, "\ h_\#"] \\
(\mathcal Y, R_\theta) \arrow[r, "\text{tr. on $B$}" ]    &  (\mathcal Y,R_\theta^B)
\end{tikzcd}
\end{center}
The previous lemma tells that the push-forward operation and the truncation operation commute. In addition, the standard push-forward theorem shows that the left-down arrow preserves the Fisher information, and the lemma guarantees that the Fisher information is preserved by the right-down arrow.

\begin{example} \label{ex: log-normal FIM from normal}
    For the family $\log \mathcal N$ of log-normal distributions
    $$r_\theta(x) =\frac{1}{x\sigma\sqrt{2\pi}} \exp\left(-\frac{(\log x -\mu)^2}{2\sigma^2}\right)\textbf{1}_{x>0},  \ \ \text{ for } \theta = (\mu,\sigma),$$
    Lemma \ref{lemma: push forward} implies that the FIM of $\log\mathcal N_{[a,b]}$, the truncated version of this family on $[a,b]$, is given by
    $I_\theta ^{[\log a, \log b]}$,
    where $I^{[\alpha,\beta]}$ is the FIM of $\mathcal N_{[\alpha,\beta]}$ provided in Proposition \ref{prop: tr normal FIM}. In particular, the Levi-Civita Christoffel symbols of $\mathcal N_{[\log a,\log b]}$ and $\log\mathcal N_{[a,b]}$ are identical 
    as well as their geodesics and FR spheres on the upper halfplane. Figure \ref{fig: non truncated and truncated lognormal perturbed densities} gives an illustration of this result.
\end{example}

\begin{figure}[!ht]
     \centering
     \begin{tabular}{ll}
     \begin{minipage}{0.5\textwidth}
         \hspace{-10mm}\includegraphics[scale=0.25]{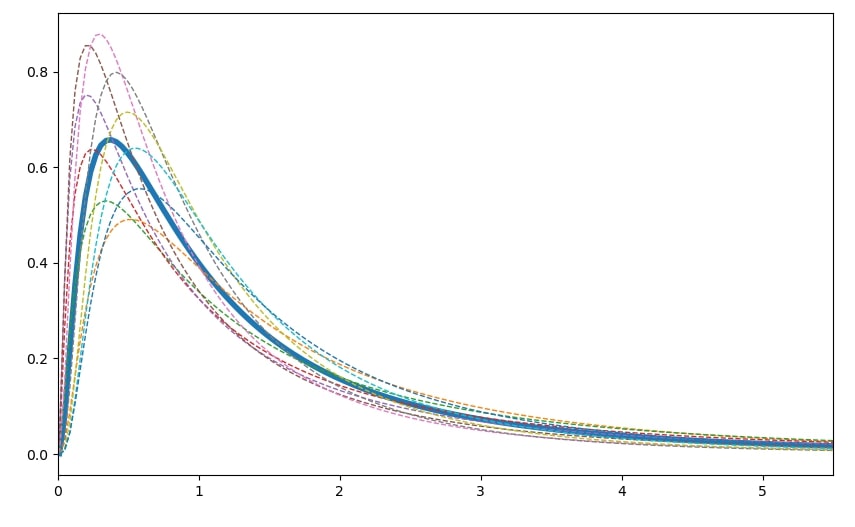}

    \footnotesize Some $\delta$-perturbations of $\log \mathcal N(0,1)$ for $\delta = 0.3$.
    \label{fig: non truncated lognormal perturbed densities}
    \end{minipage}
     & 
     \begin{minipage}{0.5\textwidth}
         \vspace{0.25cm}
         \hspace{-0.5cm}\includegraphics[scale=0.25]{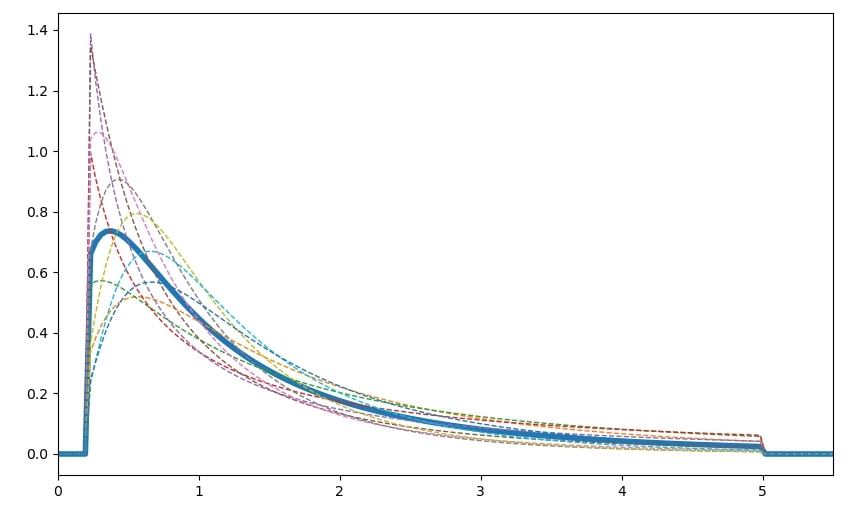}

         \footnotesize Some $\delta$-perturbations of the truncated log-normal $\log \mathcal N_{[0.2,5]}(0,1)$ for $\delta = 0.3$.
         \label{fig:truncated lognormal perturbed densities}
    \end{minipage} \\
    \end{tabular}
    \caption{$\delta$-perturbations of a log-normal density (left: usual, right: truncated).}
    \label{fig: non truncated and truncated lognormal perturbed densities}
\end{figure}

A natural question would be to know whether the previous lemma still applies when $h:\mathcal X \to \mathcal Y$ is a sufficient statistic for the family $\mathcal P$. Indeed, in this case, the Fisher metric of $\mathcal P_A$ and $\mathcal R_B$ are the same. The proof, which is obvious, is provided in Appendix \ref{appendix: proof of push-forward}.

\subsection{Exponential families}

Exponential families are wide families of probability distributions, with normal, log-normal, gamma, binomial, Poisson and geometric distributions amongst their most notable members. The densities $f_\theta$ of an exponential family with reference distribution $\nu$ on $\mathbb R^m$ are given, for $\theta \in \Theta$, by
\begin{align*}
f_\theta(x)=e^{x\cdot\theta - \psi(\theta)}, \ \ \ x\in \mathbb R^m, \theta \in \Theta. 
\end{align*}
Here, we denote $x\cdot \theta$ the Euclidean scalar product on $\mathbb R^m$, the function $\psi$ is the log-Laplace transform of $\nu$ and $\Theta$ is the domain of $\psi$. The FIM for exponential families is given by the Hessian of $\psi$ \cite{calin2014geometric}, hence exponential families can be seen as Hessian manifolds \cite{shima2007geometry}.

A truncated exponential family is built from an initial exponential family truncated on an interval $[a,b]$, here we take $m=1$. These truncation bounds are usually considered as parameters \cite{akahira2016second}, and therefore these families are not regular and the FIM is not necessarily well defined. In this subsection, we consider truncated exponential families on an interval $[a,b]$ where $a,b$ are fixed  -- a special case of truncated distributions studied by \cite{yoshioka2023information}. 

The truncated densities with normalizing constant $N_\theta$ also form an exponential family since
$$N_\theta^{-1}f_\theta(x)\textbf{1}_{x\in [a,b]}d\nu(x) = e^{x\cdot \theta - \psi(\theta) - \log N_\theta}d\nu_{[a,b]}(x),$$
where $\nu_{[a,b]}(\cdot):=\int_{\cdot} \textbf{1}_{[a,b]} d\nu$ is the reference distribution and $\tilde \psi(\theta):= \psi(\theta) + \log N_\theta$ is the log-Laplace transform of $\nu_{[a,b]}$. Hence the FIM of this family is given by the Hessian of $\tilde \psi$. 
See Figure \ref{fig: density perturb exp families} for illustrations in the non-truncated case, and refer to Appendix \ref{appendix: proof of exponential and triangular} and the Supplementary Material for details on computations.

\begin{figure}[!ht]
     \centering
     \begin{tabular}{ll}
     \begin{minipage}{0.5\textwidth}
         \includegraphics[width=1\textwidth]{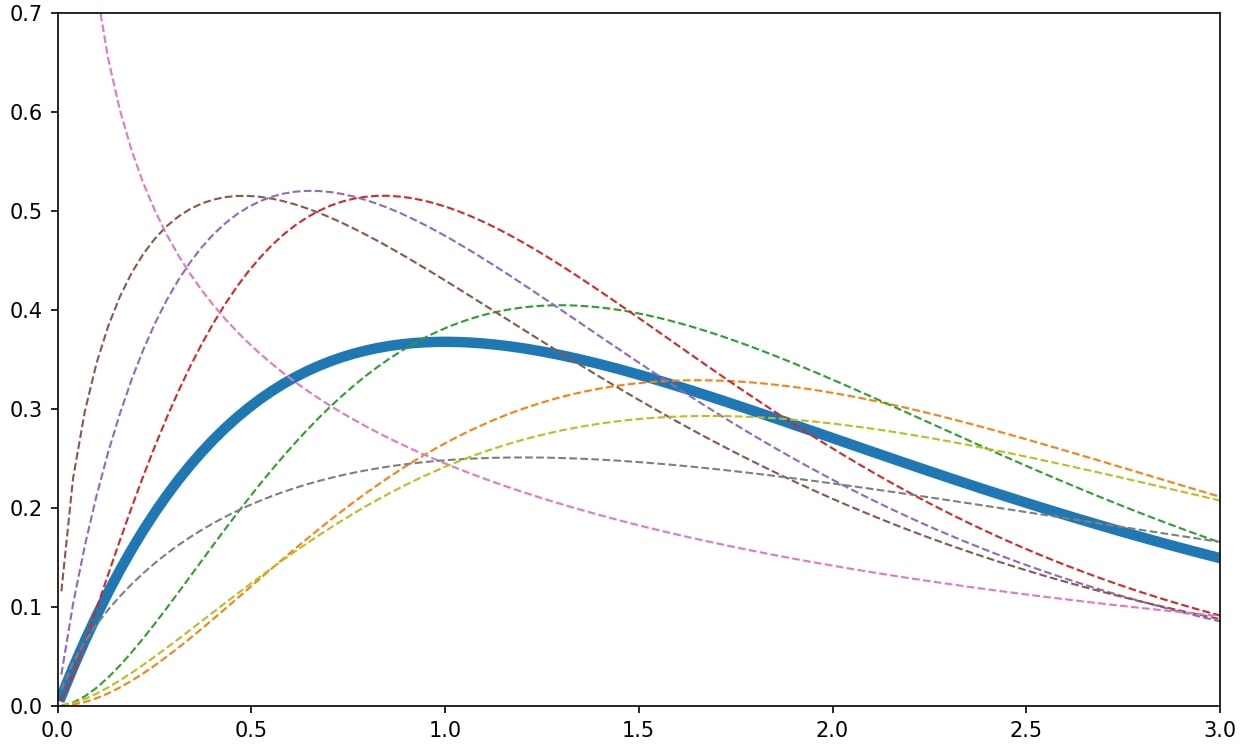}

         {\footnotesize Some $\delta$-perturbations of the $\Gamma(2,1)$ Gamma pdf, with $\delta=0.5$. }
             \label{fig:gamme density pertrub}
    \end{minipage}
     & 
     \begin{minipage}{0.5\textwidth}
         \includegraphics[width=1\textwidth]{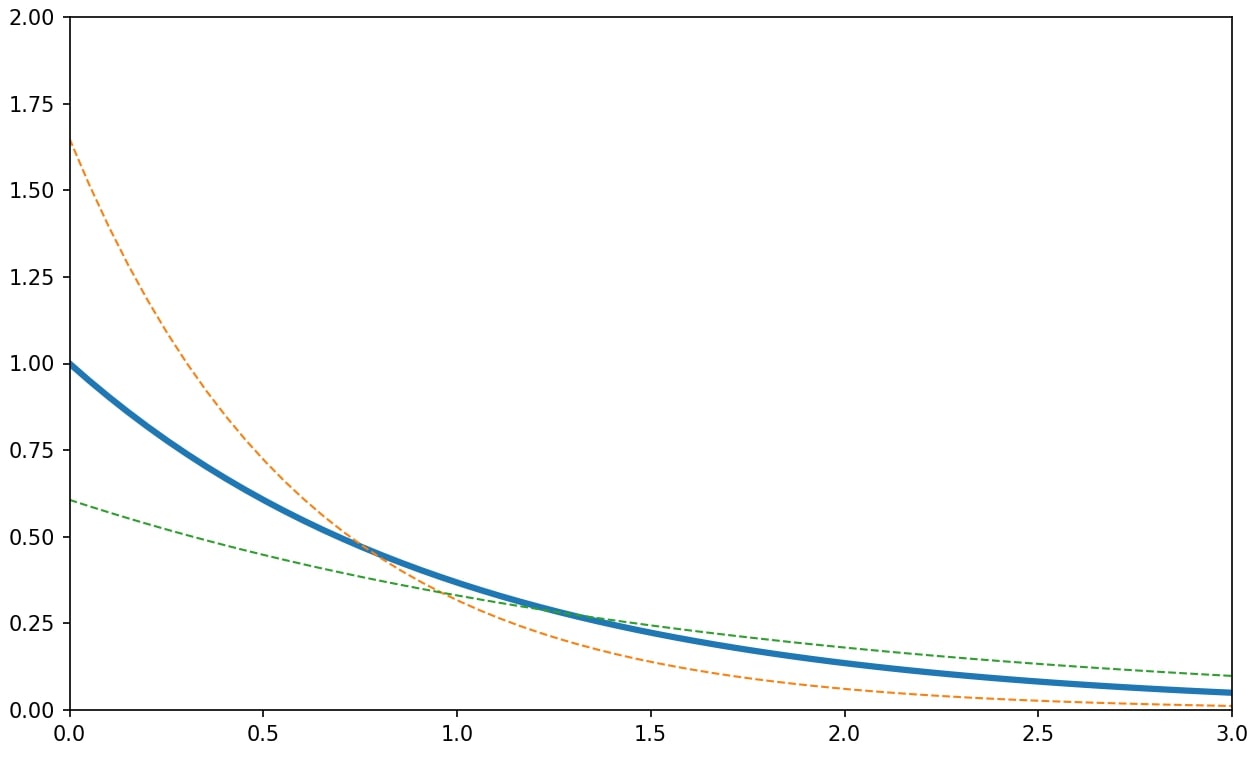}

         {\footnotesize  The $\delta$-perturbations of an exponential pdf of parameter $\lambda~=~1$, with $\delta=0.5$.}
         \label{fig:exponential distrib density perturb}
    \end{minipage}
    \end{tabular}
    \caption{Some density perturbations for two members of exponential families.}
    \label{fig: density perturb exp families}
\end{figure}

\section{Numerical experiments: robustness study of a flood model}\label{sec:numerical experiment}

In this section, the RA methodology discussed in Section \ref{sec:RA framework and truncated distributions} to is applied for studying the robustness of an analytic flood model. This model has been studied in previous articles such as \cite{Lemaitre2015, Gauchy2022}. The code we developed is available at \href{https://github.com/baalub/Truncated-distributions.git}{https://github.com/baalub/Truncated-distributions.git}

\subsection{Context}
The flood model computes a river's maximal annual water level $H$ assuming constant and uniform flow rate. This is useful information for predicting if flooding will occur. The model is defined as 
$$ H := G(K,Q,Z_m,Z_v) = Q^{0.6}\left(300K \sqrt{\frac{Z_m - Z_v}{5000}} \right)^{-0.6},$$
where $Q\in [0,3000]$ is the maximal annual flow rate, $K \in [15,90]$ is a roughness (Strickler-Manning) coefficient and $Z_m \in [54,56]$ and $Z_v \in [49,51]$ are resp. the upstream and downstream heights of the river. Figure \ref{fig:modele crue} gives an illustration of the river and Table \ref{tab: input distributions} summarizes the baseline distributions, selected from legacy data and  expert knowledge.

\begin{figure}[ht]
    \centering
    \includegraphics[width=0.4\textwidth]{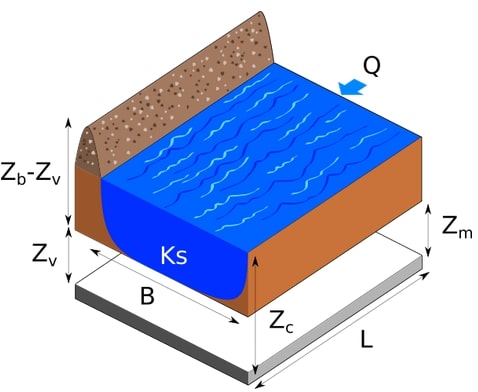}
    \caption{Parameters involved in the analytic flood risk model. Courtesy of Merlin Keller (EDF R\&D).} 
    \label{fig:modele crue}     
\end{figure}

\begin{table}[h!]
\label{table 1}
\centering
\begin{tabular}{ p{1cm}p{7cm}p{3.5cm} }
 \hline
 Inputs & Parametric family & Baseline distribution \\
 \hline
 $Q$  &   Truncated Gumbel 
 $\text{Gumb}_{[0,3000]}$ & $m=1013,\ s=558$ \\[0.5ex]
 $K$  &   Truncated Normal $\mathcal N_{[15,90]}$ & $\mu=30,\ \sigma=7.5$   \\[0.5ex]
 $Z_m$  & Triangular $\mathcal T_{[54,56]}$ & $m =55$  \\[0.5ex]
 $Z_v$  & Triangular $\mathcal T_{[49,51]}$ & $m=50$  \\[0.5ex]
 \hline
\end{tabular}
\caption{Table of baseline input distributions for the analytic flood model.}
\label{tab: input distributions}
\end{table}

The output $H$ corresponds to the river depth and flooding will occur if $H + Z_v\geq Z_d$, where $Z_d$ is the height of the dyke. The QoI we look at is the $90\%$ quantile of the output $H$.
We perform the RA method presented above to determine which inputs are robust w.r.t. input density perturbations. As indicated in Section \ref{sec:RA.framework}, we compute 
\begin{align*}
    \min_{f_{i\delta} \in \Lambda_{i\delta}} \widehat S_i(f_{i\delta}) \ \ \ \ \text{and} \ \ \ \max_{f_{i\delta} \in \Lambda_{i\delta}} \widehat S_{i}(f_{i\delta}) \tag{$\star \star$}
\end{align*}
where $\widehat S_{i}$ is an estimator of $S_{i}$ (see Appendix \ref{appendix: estimation procedure} for details), $i\in\{Q,K,Z_m,Z_v\}$ and $\delta \in \{0.1,0.2,...,1\}$. Here, we picked $\delta_{\max}=1$ for practical reasons. Three-dimensional plots of the quantile estimator on the concentric FR spheres are provided on Figure \ref{fig: graph quant estim}. Besides, Figure \ref{fig:delta plot PLI} highlights the maximum and minimum values of $\widehat S_{i}$ on concentric FR spheres, as a function of $\delta$. The confidence intervals accompanying this result are built using bootstrap techniques, detailed in  Appendix \ref{appendix: estimation procedure}. \\

\begin{figure}[ht]
     \centering
     \begin{tabular}{ll}
     \begin{minipage}{0.4\textwidth}
         \includegraphics[width=1\textwidth]{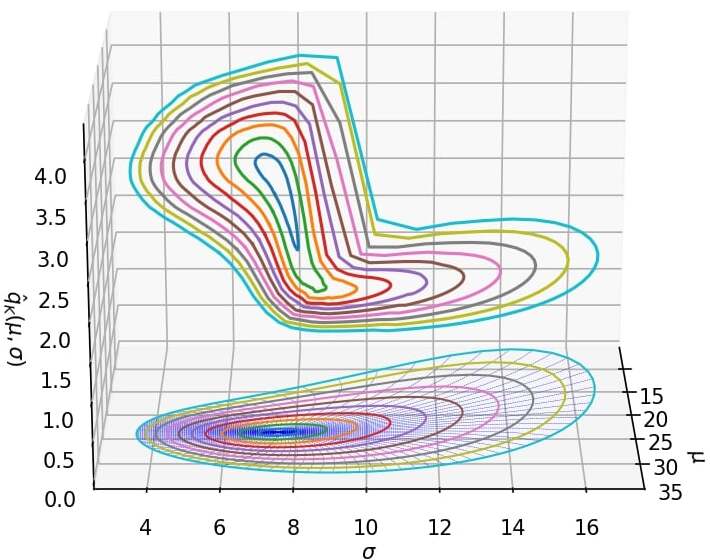}

         {\footnotesize  Graph of $(\mu,\sigma)\mapsto \widehat q^\alpha$ with perturbation of input $K$ distribution (truncated Gaussian).}
    \end{minipage}
     & 
     \begin{minipage}{0.4\textwidth}
         \includegraphics[width=1\textwidth]{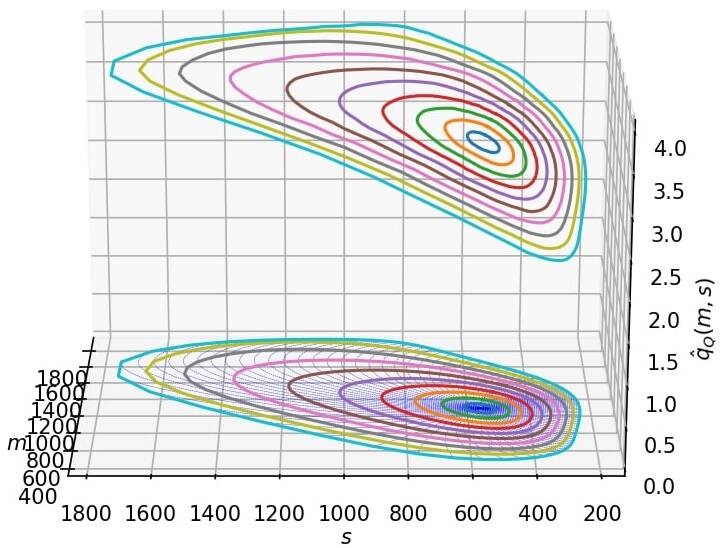}

         {\footnotesize Graph of $(m,s)\mapsto \widehat q^\alpha$ with perturbation of input $Q$ distribution (truncated Gumbel).}
    \end{minipage}
    \end{tabular}
    
    \caption{Three-dimensional plots of the quantile estimator for two inputs of the flood model. The lower plane is the parameter space for the respective inputs on which are shown the concentric FR spheres. The value of the quantile estimator $\widehat q^\alpha$ is then plotted on each of these concentric spheres.}
    \label{fig: graph quant estim}
\end{figure}

\subsection{Optimization procedure}
Computing the bounds in $(\star \star)$ for the triangular inputs $Z_m$ and $Z_v$ was not challenging since in these cases the FR spheres $\Lambda_{i\delta}$ were made up of two points. For the truncated normal $K$ and truncated Gumbel $Q$ inputs, our approach for computing $(\star \star)$ is to discretize each sphere $\Lambda_{i\delta}$, for $\delta \in \{0.1,0.2,...,1\}$, into $100$ points. We then optimize the estimator $\widehat S_{i}$ of the robustness index. We use this approach because the estimator $\widehat S_{i}$ is piecewise constant (see Appendix \ref{appendix: estimation procedure}) and thus a gradient-based method cannot be used. Improving this optimization procedure is a subject of current work, evoked further in the Discussion section.  

Discretizing the concentric FR spheres takes around $8$ minutes for each input $K$ and $Q$. Evaluating $\widehat S_{i}$ on each point of the discretization ($1000$ points) for a sample of size $N=10^4$ took around $3$ hours in total for all the inputs $K,Q,Z_m,Z_v$ on \textcolor{black}{an $11^{th}$ gen Intel(R) Core i5-1135G7 @2.40Hz computer}. The time required for the estimation of the robustness index highly depends on the choice of the QoI. Usually, estimating the expectation or a probability threshold of the output takes significantly less time than a quantile or a superquantile.

\subsection{Analysis of the numerical results}

From Figure \ref{fig:delta plot PLI}, we observe that the output of the model $G$ is sensitive to density perturbations on $K$ and $Q$ of level of at most $\delta_{\max} = 1$. For the inputs $Z_m$ and $Z_v$, it seems that perturbing their initial densities does not affect much the output.

Figure \ref{fig:evolution argmin Gumbel delta} highlights that as the perturbation level $\delta$ increases a minimizer of $\widehat S_Q$ has a tendency to give more weight to points in $[0,3000]$ that are close to $0$. 
This is not surprising since $G$ is increasing as a function of $Q$.

\begin{figure}[!ht]
     \centering
        \includegraphics[width=0.7\textwidth]{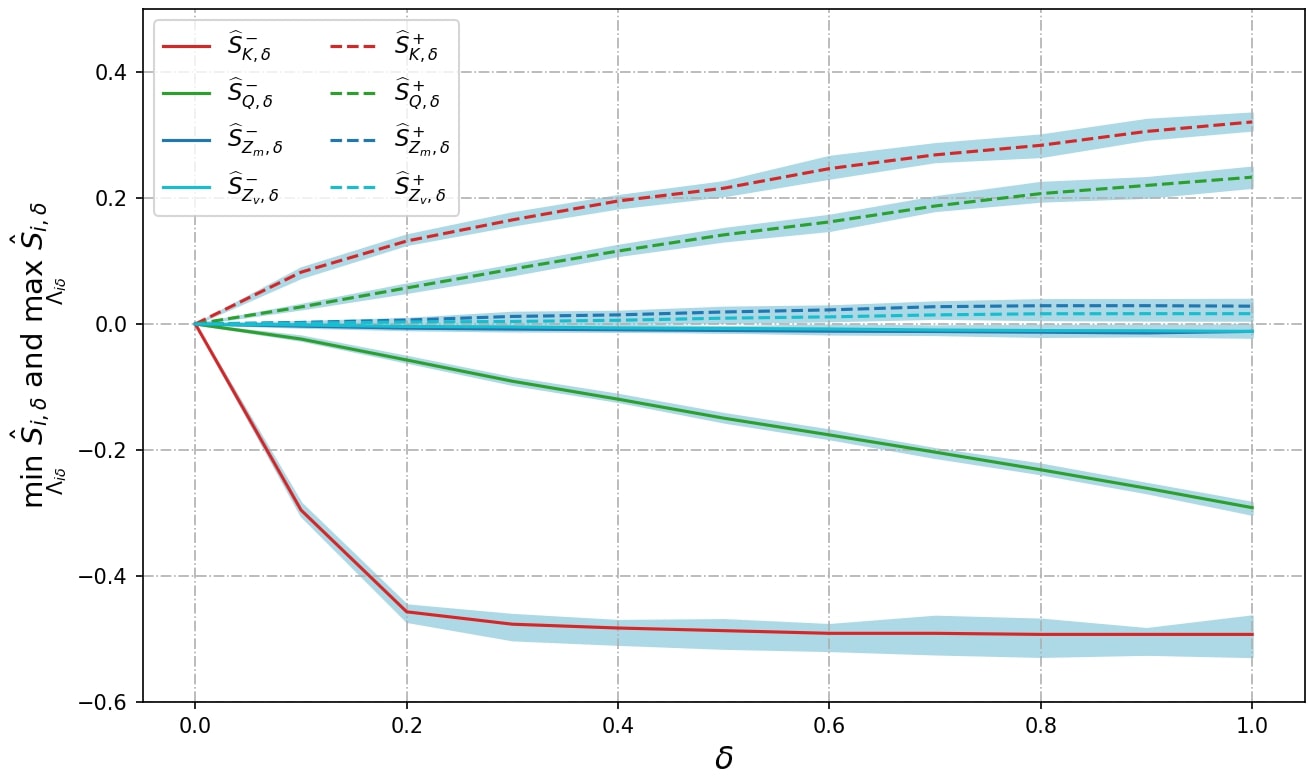}
         \caption{Evolution of the maximum and minimum value on $\Lambda_{i\delta}$ of $\widehat S_{i\delta}$ as a function of $\delta$, \textcolor{black}{where the 80\% bootstrap confidence intervals are shown in light blue.}}
         \label{fig:delta plot PLI}
\end{figure}

\begin{figure}[!ht]
         \centering
         \includegraphics[width=0.6\textwidth]{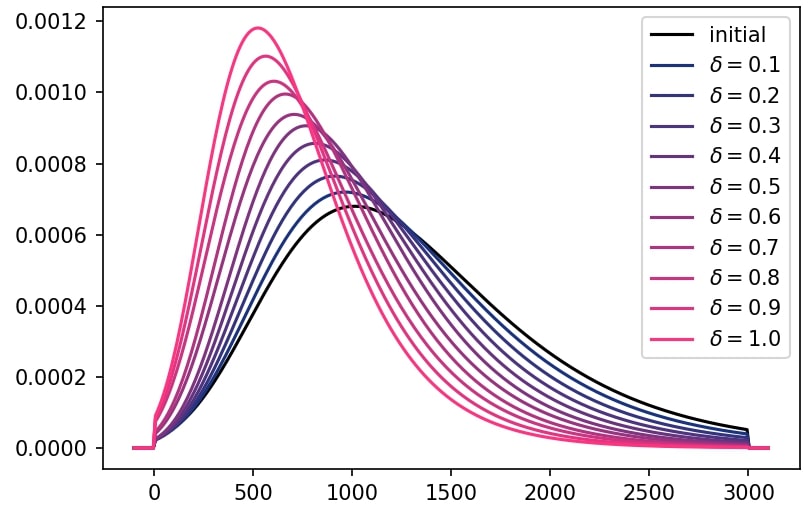}
         \caption{Evolution as a function of $\delta$ of an \textcolor{black}{argmin} for $(\star\star)$ for the Gumbel input $Q$.}
         \label{fig:evolution argmin Gumbel delta}
\end{figure}

\textcolor{black}{This RA result for the flood model is comparable to previous RA studies in papers such as \cite{Gauchy2022}. Indeed, in this paper, the inputs $Z_m$ and $Z_v$ were also the most robust to input density perturbations. However, in contrast to our case, the input $Q$ was the least robust in \cite{Gauchy2022}. This may be due to the difference in the baseline distributions for $Q$ and $K$.}

\section{Discussion} \label{sec: discussion}

In this paper, we have explained the RA methodology based on the FR distance on families of probability distributions. This distance allowed us to define $\delta$~-~perturbations of densities in some parametric family. We then illustrated the feasibility of this $\delta$~-~perturbation method in some classical families of distributions both in the truncated and non-truncated case. This required either an explicit formula for the FR distance or an approximation of the Christoffel symbols of the corresponding Fisher metric. This showed that the FR distance, which might seem hard to compute at first glance, can be handled rather easily and that the FR spheres can be approximated with good precision (at least for small radii). We also proved a result that allows us to deduce the Fisher metric for some truncated families obtained by a bijective push-forward operation of another truncated family (Lemma \ref{lemma: push forward}). Finally, we applied the RA method to an analytic flood model to illustrate its applicability.

Compared to the previous RA methods, the advantage offered by this approach is that it gives a mathematically rigorous and objective way for defining density perturbations. Nevertheless, the method leads to obstacles that have to be overcome in practice.

First, the optimization method used for problem $(\star \star)$ requires to discretize the FR spheres into finite points and then to optimize $\widehat S_{i}$ on these points. 
The choice for such an optimization procedure is related to the estimation method for $\widehat S_i$, but obviously it does not scale well to higher dimensions. One solution would be to regularize $\widehat S_{i}$ and perform Riemannian gradient descent on the FR spheres. Moreover, one can also consider the same optimization problem constrained to the FR ball with radius $\delta_{\max}$ instead of on each concentric spheres, which reduces to solving a single optimization problem and allows to obtain the maximum and minimum value of $\widehat S_i$ for all $\delta\leq \delta_{\max}$.

Second, we only considered the case where the QoI on the output $Y$ is a quantile for the flood model. However, QoIs for which $(\star)$ becomes an expectation optimization problem can be directly tackled with stochastic optimization algorithms, which allows to avoid optimizing an estimator of the objective function. This will be the topic of a future work.

Another central question is obviously to set a maximum value $\delta_{\max}$ in practice. To do so, let us first note that this maximum value can be imposed by the intrinsic geometry of the family of distributions (see the example of the family of triangular distributions). In contrast, other families with infinite diameter (location-scale families for instance) do not impose such constraints and spheres of arbitrarily large radii are well defined. Furthermore, some families with infinite diameter can contain spheres that are not compact. This may indicate the existence of distributions outside the family that are at finite FR distance. Thus, this can be useful for defining a possible $\delta_{\max}$ at a density $f_{\theta_0}$
$$ \delta_{\max}:= \sup\big\{\delta>0 \ | \ \mathcal S(f_{\theta_0},\delta) \text{ is compact}\big\},$$
where $\mathcal S(f_{\theta_0},\delta)$ is the FR sphere at $f_{\theta_0}$ with radius $\delta$. 

This non-compactness property of spheres is linked to geodesic completeness of the family, through the Hopf-Rinow theorem \cite{do1992riemannian}. 
Up to our knowledge, completeness of families of distributions is not well studied in the literature (even less for truncated ones). Hence determining an intrinsic $\delta_{\max}$ in this way for a specific family is a challenging open problem. Actually $\delta_{\max}$ is only defined for a point in a specific family. Note further that compactness of FR spheres $\Lambda$, coupled with a continuity assumption on $f_{i\delta}\in\Lambda \mapsto S_i(f_{i\delta})$, is useful for guarantying existence of solutions to ($\star$). 

Moreover, choosing a $\delta_{\max}$ should also ensure that distributions within the FR sphere defined in this way have ``practical" plausibility with regard to the UQ problem. For example, we could try to impose that $f_{i\delta_{\max}}$ should correspond to the least informative plausible distribution (e.g., a uniform distribution) or the most penalizing possible distribution (in the sense of least favorable prior distributions in the Bayesian framework \cite{dytso2018structure}).

Another approach, extrinsic to the geometric problem and which would enable us to compare different perturbations approaches for RA, could be to interpret $\delta_{\max}$ as the result of an inverse problem. With a view to testing robustness against a limit risk defined, for example, as a limit output $\alpha-$quantile $q_{\lim}(\alpha)$ for $Y$, with $0<\alpha \ll 1$, as in \cite{benoumechiara2020detecting}, consider
\begin{eqnarray*}
\delta_{i,\max} & = & \arg\min\limits_{\delta_i} \left|q_{\lim}(\alpha) - q(\alpha ; \delta_i) \right|
\end{eqnarray*}
where $q(\alpha ; \delta_i)$ is the $\alpha-$quantile of $Y_i=G(X_1,\ldots,X_{i,\delta_i},\ldots,X_d)$ where the $X_j\sim f_j$ for $j\neq i$ and $X_{i,\delta_i}\sim f_{i\delta_i}$. Such a definition  would give a practical, model-agnostic and more easily interpretable meaning to the different values of $\delta_{\max}$. It would, however, require the resolution of $d$ inverse problems. This could be computationally time-consuming. 

While the methodology has been illustrated in situations where the components of $\bX$ are independent, it is easily adaptable to cases of stochastic dependence modeled by copulas. These are amongst usual implementation choices in UQ studies (e.g., \cite{Baudin2017,benoumechiara2020detecting,torre2019general}). Nevertheless, adaptations are needed to handle other common types of joint modeling, such as hierarchical or graph-based approaches.

\backmatter
\begin{appendices}

\section{Estimation procedure for a given QoI} 
\label{appendix: statistics} 
\subsection{Quantile estimation procedure}\label{appendix: estimation procedure}

For all forward models $G$, a QoI of the output $Y$ often cannot be computed explicitly. Nevertheless, an estimate can be provided by using a statistical estimation procedure. We describe here how we construct this estimator, following  the importance sampling approach suggested by \cite{Gauchy2022,iooss2022bepu}. For better reading and to fit our experiments, we consider that all inputs $\textbf{X}=(X_1,...,X_d)$
are independent, but generalizing the approach to non-independent inputs is straightforward. 
\texttt{
\begin{enumerate}
\item  Sample $\textbf{X}^1,...,\textbf{X}^N $ from the baseline distribution $P_{\bX}$ and denote $Y_j := G(\textbf{X}^j)$. 
\item For each $i=1,\ldots,d$, denote $Y^{i\delta}:=G(\textbf{X}^{i\delta})$ where $\textbf{X}^{i\delta}\sim f_1\otimes \cdot \cdot \cdot \otimes f_{i\delta} \otimes \cdot \cdot \cdot f_d$ and $f_{i\delta}$ is a $\delta$~-~perturbation of $f_i$. 
\item Estimate the cumulative distribution function $F_{Y^{i\delta}}$ of $Y^{i\delta}$ by 
$$\widehat F_{i\delta} (t) = \frac{1}{\sum_{j=1}^N L_{i,j}(\theta)} \sum_{j=1}^N L_{i,j}(\theta)\textbf{1}_{Y_j\leq t},$$
where $\theta \in \Lambda_{i\delta}\subset \Theta_i$ and $L_{i,j}(\theta) ={f_{i\delta}(\textbf{X}^j)}/{f_{i}(\textbf{X}^j)}$ are the likelihood ratios.
\end{enumerate}
}
By the law of large numbers, the estimator $\widehat F_{i\delta} (t)$ converges almost surely to the cumulative distribution function of $Y^\theta$ for all $t\in \mathbb R$. The quantile estimator $\hat q_{i\delta}$ of $q^\alpha(Y^{i\delta})$ is built by plug-in:
$$ \hat q_{i\delta} := \inf \{ t \in \mathbb R \ | \ \widehat F_{i\delta} (t) \geq \alpha \}.$$
Hence $q^\alpha(Y^{i\delta})$ can be estimated for all $i$ and for all $\delta$ given only a sample of $P_{\textbf{X}}$. The estimator of the PLI robustness index $S_{i}$ (Equation (\ref{eq:PLI})) is then given by
$$ \widehat S_{i}(f_{i\delta}) := \frac{\hat q_{i\delta}}{\hat q_i} - 1,$$
where $\hat q_i$ is the empirical quantile estimator of $q^\alpha(Y)$ where $Y:=G(X)$ and $X\sim f_1\otimes\cdot\cdot\cdot\otimes f_d$. The asymptotic properties of $\widehat S_{i}$ are studied in \cite{Gauchy2022}.

The estimators $\widehat F_{i\delta}(t)$ and $\hat q_{i\delta}$ can be seen as functions of the parameter $\theta \in \Theta_i$. The space $\Theta_i$ is a parameter space of the family $\mathcal P_i$ of possible perturbation densities of $f_i$. Since the estimator $\widehat F_{i\delta}(t)$ is piecewise constant as a function of $t$, jumping on the sample points $Y_j$, and as a function of $\theta$, its generalized inverse $\hat q_{i\delta}$ is also piecewise constant taking values in $\{Y_1,...,Y_N\}$ 
$$ \theta \in \Theta_i \to \hat q_{i\delta} \in \{Y_1,...,Y_N\}.$$
Similarly since $\hat q_i$ also takes finite values, this shows that the PLI estimator $\widehat S_{i}$ also takes finite values and 
$$    \min_{f_{i\delta} \in \Lambda_{i\delta}} \widehat S_{i}(f_{i\delta}) \ \ \ \ \text{and} \ \ \ \max_{f_{i\delta} \in \Lambda_{i\delta}} \widehat S_{i}(f_{i\delta})$$
is a discrete optimization problem. Therefore, a gradient-based optimization method cannot be used directly.

\subsection{Bootstrap confidence intervals}\label{appendix: bootstrap confidence interval}
We used a bootstrap procedure to compute confidence intervals for
$$ \min_{f_{i\delta} \in \Lambda_{i\delta}} S_{i}(f_{i\delta}) \ \ \ \ \text{and} \ \ \ \max_{f_{i\delta} \in \Lambda_{i\delta}} S_{i}(f_{i\delta}).$$
Focusing only on the maximization problem, for each $\delta$, we approximated the maximum value of $\widehat S_{i}$ on $\Lambda_{i\delta}$ by
$\max_{f_{i\delta} \in \bar \Lambda_{i\delta}} \widehat S_{i}(f_{i\delta}),$
where $\bar \Lambda_{i\delta}$ is a discretization of the FR sphere $\Lambda_{i\delta}$ ($100$ points in our simulation). Next we compute the maximum points $f_{i\delta}^*$ on the discrete FR sphere $\Lambda_{i\delta}$ directly. This means that 
$\widehat S_{i} (f_{i\delta}^*) = \max_{f_{i\delta} \in \bar \Lambda_{i\delta}} \widehat S_{i\delta},$
and since $\widehat S_{i} (f_{i\delta}^*)$ is an estimator built on the sample $\{Y_1,...,Y_N\}$, we can use a bootstrap procedure to obtain a confidence interval for $\max_{f_{i\delta} \in \bar \Lambda_{i\delta}} \widehat S_{i} (f_{i\delta})$. This is expected to give a confidence interval for 
$\max_{f_{i\delta} \in \Lambda_{i\delta}} \widehat S_{i} (f_{i\delta}),$
provided $\bar \Lambda_{i\delta}$ is refined enough.

\section{More details on geometry and proofs}\label{appendix: geometric results}
\subsection{Computation of Fisher-Rao spheres using geodesics} \label{appendix: geodesics and Fisher spheres}

As explained in Section \ref{sec: Fisher-Rao spheres illustration}, the sphere approximation algorithm is based on Proposition \ref{prop: geod spheres}. But this method may fail for large radii. The reasons of this failure may be either the incompleteness of the manifold or finiteness of the injectivity radius, which will be defined in the following.  

\subsubsection{Completeness}
A connected Riemannian manifold $(M,g)$ is geodesically complete if all geodesics $\gamma$ are defined for all time $t\in\mathbb R$. This is equivalent to the completeness of $M$ as a metric space endowed with the geodesic distance $d$, by the Hopf-Rinow theorem \cite{do1992riemannian}. This theorem additionally states that a connected manifold $M$ is geodesically complete if and only if every closed and bounded set in $M$ is compact. This implies that if the manifold is not complete, then there may exist some spheres (which are always closed and bounded) that are not compact. Note that, around a given point, spheres with small enough radii are always compact. Therefore, the non-compactness of spheres may only arise for large enough radii.

For the sphere approximation algorithm from Section \ref{sec: Fisher-Rao spheres illustration}, approximating non-compact spheres can lead to numerical issues. This is because for a given chart on which the geodesic equation is numerically solved, depending on the initial conditions, the solution (the geodesic) $\gamma(t)$ may blow-up in finite time $t_*$. Hence, it will be harder for the approximated solution $\bar \gamma(t)$ to be accurate as $t$ gets closer to $t_*$.
 
\subsubsection{Injectivity radius}
Let $(M,g)$ be a connected Riemannian manifold. If the injectivity radius $\iota_M=\infty,$ then Proposition \ref{prop: geod spheres} can be used for all $\delta>0$. On the contrary, if $\iota_M<\infty$, the radius $\delta$ has to be small enough or the manifold needs to verify some additional properties in order for Proposition \ref{prop: geod spheres} to hold for all $\delta>0$. More precisely, let us first define the exponential map at $p\in M$
\begin{align*}
    \exp_p \ : \ &\Omega_p \subset T_p M \longrightarrow M \\
    & \ v \longmapsto\gamma_{p,v}(1).
\end{align*}
Here, $\gamma_{p,v}$ is the unique geodesic verifying the conditions $\gamma_{p,v}(0)=p$ and $\dot \gamma_{p,v}(0)=v$. The set $\Omega_p$ is defined as the set of vectors $v \in T_p M$ such that $\gamma_{p,v}$ is defined until time $t=1$. It is a neighborhood of the null vector $0_p$ in $T_p M$. In addition, the exponential map $\exp_p$ is a local diffeomorphism in a small neighborhood around $0_p$. 
The question whether $\exp_p$ is a global diffeomorphism is adressed using the notion of injectivity radius. 

Let us first denote $B(p,r)$ and $\mathcal S(p,r)$ resp. the open ball and sphere in $M$ centered at $p$ with radius $r$ for the geodesic distance. Further, let us denote $b(0_p,r):= \{v \in T_p M \ \big| \ |v|_p< r\}$ and $s(0_p,r):= \{v \in T_p M \ \big| \ |v|_p=r\}$ resp. the open ball and sphere centered at $0_p$ with radius $r$ in $T_p M$. Since $\exp_p$ is a local diffeomorphism around $0_p$ for all $p$ in $M$, there exists $r_p>0$ such that for $r\leq r_p$ we have $\exp_p : b(0_p,r_p) \to B(p,r_p)$ is a diffeomorphism. We also have that $\exp_p(s(0_p,r_p)) = \mathcal S(p,r_p)$ and $\exp_p(b(0_p,r_p)) = B(p,r_p)$. The injectivity radius $\iota_p$ at point $p$ is defined as the largest radius $r$ for which $\exp_p$ is a diffeomorphism onto the geodesic ball $B(p,r)\subset M$. That is,
$$\iota_p := \sup\{ r > 0 \ | \ \exp_p \text{ is a diffeomorphism on } b(0_p,r) \}.$$
Further, the injectivity radius of the manifold is defined as $\iota_M:= \inf_{p\in M} \iota_p$. 

If $\iota_M$ is infinite, then $\exp_p : T_p M \to M$ is a global diffeomorphism (for any $p$ in $M$). This implies that for all $r>0$ the spheres $\mathcal S(p,r)$ in $M$ can be identified with
$$\mathcal{S}(p,r)=\exp_p\big(s(0_p,r)\big)=\big\{\gamma(1) \ | \ \gamma \text{ geodesic s.t. } \gamma(0)=p, \dot \gamma(0)=v \text{ and } |v|_p=\delta \big\}.$$
Hence, this justifies the use of the initial algorithm from Section \ref{sec: Fisher-Rao spheres illustration} for all radius $r>0$. This also implies that the manifold is complete.

Now, if the injectivity radius $\iota_M<\infty,$ then there exists $p \in M$ such that for $r\geq \iota_{p}$ the exponential map $\exp_{p}$ fails to be a diffeomorphism from $b(0_{p},r)$ to $B(p,r)$. 
The good news is that if the manifold is complete, the exponential map is always onto $\mathcal S(p,r)$ 
$$\mathcal S(p,r) \subset \exp_p\big( s(0_p,r)\big).$$
This guaranties that the sphere approximation algorithm from Section \ref{sec: Fisher-Rao spheres illustration} gives at least some points that approximate $\mathcal S(p,r)$ (but also some that do not necessarily fall on $\mathcal S(p,r)$). This is true since $M$ is assumed to be connected.
In general, it is not obvious to determine whether $\iota_M$ is infinite or not. Some curvature conditions (Hadamard theorem, see \cite{do1992riemannian}) lead to lower bounds for $\iota_p$. Nevertheless, in practice these conditions are hard to verify. 

\subsection{Proofs} \label{appendix: proof of push-forward}

\begin{proof}[Proof of Lemma \ref{lemma: push forward}]
The fact that $\mathcal R_B$ and $\mathcal P_A$ have the same FIM is a direct consequence of the first assertion and the standard pushforward theorem. Indeed, if we assume $\mathcal R_B=h_\#\mathcal P_A$, then it implies that they have the same FIM. Moreover, since $\mathcal P_A$ and $h_\#\mathcal P_A$ have the same FIM (standard pushforward theorem), the second assertion is obvious.

Now let us prove the first assertion. Let $C$ be a measurable subset on $\mathcal Y$. We have on the one hand
$$h_\#(P_\theta^A)(C) = P_\theta^A(h^{-1}(C)) = \frac{P_\theta(h^{-1}(C)\cap A)}{P_\theta(A)}.$$ 
On the other hand, 
we have
\begin{align*}
    (h_\#P_\theta)^B(C) = \frac{h_\#P_\theta(C\cap B)}{h_\#P_\theta(B)}=\frac{P_\theta(h^{-1}(C\cap B))}{P_\theta(h^{-1}(B))}&=\frac{P_\theta(h^{-1}(C)\cap h^{-1}(B))}{P_\theta(A)}\\
    &=\frac{P_\theta(h^{-1}(C)\cap A)}{P_\theta(A)}.
\end{align*}
This allows to conclude the proof. Note that $h$ does not need be bijective in the proof.
\end{proof}

\begin{proof}[Proof of Lemma \ref{lemma: push forward} in the case of sufficient statistics]
Under the previous notations, if we now assume that $h:\mathcal X \to \mathcal Y$ is a sufficient statistic for $\mathcal P$, then the first assertion of Lemma \ref{lemma: push forward} still holds. Note that in this case, $h$ is also a sufficient statistic of a truncated version of $\mathcal P$ i.e. $\mathcal P_A$ (see \cite{tukey1949sufficiency}). In addition, the Fisher-Neyman factorisation theorem (see \cite{halmos1949application}) implies that
$$p_\theta(x) = u(x) r_\theta\big(h(x)\big),$$
where $p_\theta$ and $r_\theta$ are resp. the pdf of $P_\theta$ and $R_\theta:=h_\# P_\theta$ and $u$ is a non-negative function on $\mathcal X$. Let us denote resp. $p_\theta^A$ and $r_\theta^B$ the truncated pdfs of $p_\theta$ on $A$ and $r_\theta$ on $B$. We can compute the FIM of $K_\theta^B$ of the truncated family $\mathcal R_B=\{R_\theta^B\}_{\theta\in\Theta}$,
\begin{align}
    (K_\theta^B)_{ij} &= \int_\mathcal Y \big(\partial_i \log r_\theta^B(y)\big)\big(\partial_j \log r_\theta^B(y)\big) dR_\theta^B(y) \nonumber\\
     &= \int_\mathcal X  \big(\partial_i \log r_\theta^B(h(x))\big)\big(\partial_j \log r_\theta^B(h(x))\big) dP_\theta^A(x) \\
     &= \int_\mathcal X  \left(\partial_i \log \frac{p_\theta(x)}{u(h(x))R_\theta(B)}\right)\left(\partial_j \log \frac{p_\theta(x)}{u(h(x))R_\theta(B)}\right) dP_\theta^A(x)  \\
     &= \int_\mathcal X  \left(\partial_i \log \frac{p_\theta(x)}{P_\theta(A)}\right)\left(\partial_j \log \frac{p_\theta(x)}{P_\theta(A)}\right) dP_\theta^A(x) \\
     &= \int_\mathcal X  \left(\partial_i \log p_\theta^A(x)\right)\left(\partial_j \log p_\theta^A(x)\right) dP_\theta^A(x) \\
     &= (I_\theta^A)_{ij}, \nonumber
\end{align}
where (B1) is obtained from the transfer theorem, (B2) and (B4) are resp. obtained by the definition of $r_\theta^B$ and $p_\theta^A$ as well as the Fisher-Neyman theorem and (B3) holds because $u\big(h(x)\big)$ does not depend on $\theta$ and $R_\theta(B)=P_\theta(A)$.    
\end{proof}

\begin{proof}[Proof of Proposition \ref{prop: loc scale FIM}]
    Let $p_\theta:\mathbb R \to (0,\infty)$ be defined by 
$$ p_\theta(x) = \frac{1}{s} p\left(\frac{x-m}{s}\right),$$
where $\theta=(m,s) \in \mathbb R\times (0,\infty)$ is the parameter. Here, $p$ is the initial density. Let us compute the FIM for this family. First, taking the partial derivatives of $\log p_\theta$ we get
$$\partial_m \log p_\theta(x) = \frac{-\frac{1}{s} p'\left( \frac{x-m}{s}\right)}{p\left( \frac{x-m}{s}\right)} \ \ \ \text{   and   } \ \ \ \partial_s \log p_\theta(x) =\left( -\frac{x-m}{s^2}\right) \frac{p'\left( \frac{x-m}{s}\right)}{p\left( \frac{x-m}{s}\right)}  -\frac{1}{s}.$$

Now, we can compute 
\begin{align*}
    (I_\theta)_{11} = \int_{\mathbb R} \left(\frac{-\frac{1}{s} p'\left( \frac{x-m}{s}\right)}{p\left( \frac{x-m}{s}\right)}\right)^2 \frac{1}{s}p\left( \frac{x-m}{s}\right) dx
    = \frac{1}{s^2}\alpha
\end{align*} 
\begin{align*}
    (I_\theta)_{12}
    &= \int_{\mathbb R} \left(\frac{-\frac{1}{s} p'\left( \frac{x-m}{s}\right)}{p\left( \frac{x-m}{s}\right)}\right) \left( \left( -\frac{x-m}{s^2}\right) \frac{p'\left( \frac{x-m}{s}\right)}{p\left( \frac{x-m}{s}\right)}  -\frac{1}{s}\right) \frac{1}{s}p\left( \frac{x-m}{s}\right) dx= \frac{1}{s^2}\gamma
\end{align*}
\begin{align*}
    (I_\theta)_{22} = \int_{\mathbb R} \left(\left( -\frac{y}{s}\right) \frac{p'(y)}{p(y)}  -\frac{1}{s} \right)^2 \frac{1}{s}p(y) s\ dy
    =\frac{1}{s^2}\beta.
\end{align*}
Here $\alpha,\beta,\gamma$ are given by
$$\alpha=\int_{\mathbb R} \left(\frac{ p'(y)}{p(y)}\right)^2 p(y)\ dy, \ \ \ \beta=\int_{\mathbb R} \left(y\frac{p'(y)}{p(y)}  + 1\right)^2 p(y) dy$$
and 
$$\gamma = \int_{\mathbb R} \left(\frac{-p'(y)}{p(y)}\right) \left(  -y\frac{p'(y)}{p(y)} - 1 \right) p(y) dy.$$
We refer to \cite{komaki2007bayesian} for a similar computation in higher dimension. We assume that these quantities are well defined and finite. In this case, the FIM for the location-scale family of $p$ is given by 
$$ I_\theta = \frac{1}{s^2}\begin{bmatrix} \alpha & \gamma \\ \gamma & \beta \end{bmatrix}.$$
The matrix $\begin{bmatrix} \alpha & \gamma \\ \gamma & \beta \end{bmatrix}$ can be diagonalized on an orthonormal basis of eigenvectors
$$I_\theta =  Q_{\alpha\beta\gamma}\left(\frac{1}{s^2}\begin{bmatrix}
    \lambda_1 & 0 \\
    0 & \lambda_2
\end{bmatrix}\right) Q_{\alpha\beta\gamma}^\intercal = Q_{\alpha\beta\gamma}\begin{bmatrix}
    \sqrt{\lambda_1} & 0 \\
    0 & \sqrt{\lambda_2}\end{bmatrix}
    \left(\frac{1}{s^2}\begin{bmatrix}
    1 & 0 \\
    0 & 1\end{bmatrix}\right)
    \begin{bmatrix}
    \sqrt{\lambda_1} & 0 \\
    0 & \sqrt{\lambda_2}\end{bmatrix}Q_{\alpha\beta\gamma}^\intercal.$$ 
Here, $Q_{\alpha\beta\gamma}$ is an orthogonal change-of-basis matrix and $\lambda_1,\lambda_2\geq0$ are the eigenvalues and they depend on $\alpha,\beta,\gamma$. By denoting $P_{\alpha\beta\gamma} := \begin{bmatrix}
    \sqrt{\lambda_1} & 0 \\
    0 & \sqrt{\lambda_2}\end{bmatrix}Q_{\alpha\beta\gamma}$ we thus get 
$$I_\theta = P_{\alpha\beta\gamma}  \left(\frac{1}{s^2}\begin{bmatrix}
    1 & 0 \\
    0 & 1\end{bmatrix}\right) P_{\alpha\beta\gamma}^\intercal.$$
This proves that the FIM is, up to reparameterization, the Poincaré metric on $\mathbb H := \{(x,y) \in \mathbb R^2 \ | \ y >0\}$.
\end{proof}

For any location-scale family, the change-of-basis matrix $P_{\alpha\beta\gamma}$ depends on $\alpha,\beta,\gamma$ therefore having a good approximation of these integral quantities will allow us to approximate $P_{\alpha \beta \gamma}$. This will in turn allow us to compute geometric quantities (distance, curvature, angle,...) for the location-scale family using only the Poincaré metric. Indeed, these quantities are already known for the latter.
    
\subsection{Explicit Fisher-Rao distance computation in the triangular and exponential distributions} \label{appendix: proof of exponential and triangular}
The triangular and exponential distribution families only depend on a single parameter. We can thus easily compute the Fisher information and the Fisher-Rao distance exlicitly.
\subsubsection{Triangular family}
The triangular family $\mathcal T_{[a,b]}$ of densities $q_m$ on the interval $[a,b]$ is given by
$$q_m(x)= \frac{2(x-a)}{(b-a)(m-a)}\textbf{1}_{x\in [a,m]} + \frac{2(b-x)}{(b-a)(b-m)}\textbf{1}_{x\in (m,b]}, $$
where $m\in(a,b)$ is the parameter. The Fisher information $i_m$ is computed as follows: \begin{itemize}
    \item for $x\in [a,m]$, 
    $\partial_m \log q_m(x) =  - \partial_m \log(m-a) = -\frac{1}{m-a}, $
    \item for $x \in (m,b]$,
    $\partial_m \log q_m(x) = -\partial_m \log(b-m) = \frac{1}{b-m}.$
\end{itemize}
Hence, we have
\begin{align*}
    i_m &= \int_a^b \big(\partial_m \log q_m(x)\big)^2 q_m(x) dx \\
        &= \int_a^m \frac{1}{(m-a)^2} \frac{2(x-a)}{(b-a)(m-a)} dx + \int_m^b \frac{1}{(b-m)^2} \frac{2(b-x)}{(b-a)(b-m)} dx \\
        &= \frac{1}{(m-a)^3(b-a)}(m-a)^2 + \frac{1}{(b-m)^3(b-a)}(b-m)^2\\
        &= \frac{1}{b-a}\left(\frac{1}{m-a} + \frac{1}{b-m} \right) = \frac{1}{(m-a)(b-m)}.
\end{align*}
As $\{q_m : m \in (a,b)\}$ is a one parameter family, every geodesic correspond to a segment $\gamma(t) = m_0 + tv$, up to reparameterization. But, since the integral in the definition of the Fisher-Rao distance 
is invariant under reparameterization, we can explicitly compute the distance between two points using the segment $\gamma(t)~=~m_0~+~tv$. To begin with, let $\alpha:= \frac{(a+b)}{2}$ and $\beta := \left(\frac{a-b}{2}\right)^2$. We have $(b-m)(m-a) = -\left(m-\alpha \right)^2 + \beta.$ Then, for $m_0,m_1 \in (a,b)$, we compute 
\begin{align*}
d(m_0,m_1) &=\left| \int_{m_0}^{m_1} \frac{dm}{\sqrt{-(m-\alpha)^2 + \beta}}\right| = \left|\int_{m_0}^{m_1}\frac{dm}{\sqrt{\beta}\sqrt{-\left(\frac{m-\alpha}{\sqrt{\beta}}\right)^2 + 1}} \right|  \\
&\overset{u:=\frac{m-\alpha}{\sqrt{\beta}}}{=} \left|\int_{\frac{m_0-\alpha}{\sqrt{\beta}}}^{\frac{m_1-\alpha}{\sqrt{\beta}}} \frac{du}{\sqrt{1-u^2}}\right| \\
&=\left| \arcsin\left(\frac{m_1-\alpha}{\sqrt{\beta}}\right)-\arcsin\left(\frac{m_0-\alpha}{\sqrt{\beta}}\right)\right|.    
\end{align*}
This means that the Fisher-Rao sphere centered at $m_0 \in (a,b)$ with small radius $\delta>0$, is explicitly determined by the following two points
$$
\begin{cases}
m_+ = \sqrt{\beta} \sin\left(\delta + \arcsin\left(\frac{m_0-\alpha}{\sqrt{\beta}}\right) \right) + \alpha, \ \ \ m\geq m_0, \\
m_- = \sqrt{\beta} \sin\left(\arcsin\left(\frac{m_0-\alpha}{\sqrt{\beta}}\right) - \delta \right) + \alpha, \ \ \ m < m_0.
\end{cases}
$$

\subsubsection{Exponential distributions family}

The exponential family $\mathcal E$ has densities given by
$$ p_\lambda(x) = \lambda e^{-\lambda x} \textbf{1}_{x>0}, \ \ \  \lambda \in (0,\infty).$$
The Fisher information is given by $i_\lambda = \lambda^{-2}$ (see \cite{calin2014geometric}, page 25). Similar to the triangular family, the Fisher-Rao geodesic distance in this case is
$$ d(\lambda_0,\lambda_1) = \left| \int_{\lambda_0}^{\lambda_1} \sqrt{\frac{1}{\lambda^2}} d\lambda \right| =  \left|\log\left(\frac{\lambda_1}{\lambda_0}\right) \right|, \ \ \ \lambda_0,\lambda_1>0.$$
So that, the Fisher-Rao sphere centered at $\lambda_0$ with radius $\delta$ is $\{\lambda_-,\lambda_+\}$ given by
$$\begin{cases}
\lambda_+ = \lambda_0 e^{\delta}, \ \ \ \lambda_+\geq \lambda_0,\\
\lambda_- = \lambda_0 e^{-\delta}, \ \ \ \lambda_-\leq \lambda_0.\\
\end{cases}$$
Note that this family has infinite diameter since 
$$\text{diam}(\mathcal E) := \sup_{\lambda_0,\lambda_1 \in (0,\infty)} d(\lambda_0,\lambda_1)=\sup_{\lambda_0,\lambda_1 \in (0,\infty)} \left|\log\left(\frac{\lambda_1}{\lambda_0}\right) \right|=\infty. $$

\pagebreak

\section{Dualistic structure in information geometry} \label{appendix: information geometry}
As explained in the main article, the Riemannian structure of $\mathcal P$ allows to obtain the Levi-Civita affine connection $\bar \nabla$. In addition, Eguchi's formulas \cite{eguchi1985differential} endows the space $\mathcal P$ with an additional dualistic structure:
\begin{align*}
    \Gamma_{ij,k} (\theta_0) =  - \partial_{\theta^i}\partial_{\theta^j}\partial_{\theta_0^k} \Big[ \mathcal D(P_\theta | P_{\theta_0}) \Big]_{\theta=\theta_0},\\
    ^*\Gamma_{ij,k} (\theta_0) =  - \partial_{\theta^k}\partial_{\theta_0^i}\partial_{\theta_0^j} \Big[ \mathcal D(P_\theta | P_{\theta_0}) \Big]_{\theta=\theta_0}.
\end{align*}
The coefficients $\Gamma_{ij,k}$ and $^*\Gamma_{ij,k}$ are the so-called Christoffel symbols (of the first kind) of two affine connections on $\mathcal P$ denoted by $\nabla$ and $\nabla^*$ that verify a dualistic (conjugacy) property \cite{nielsen2020elementary}. This property implies that the unique Levi-Civita connection $\bar \nabla$ (resp. Christoffel symbols $\bar \Gamma$) of the Fisher metric $g$ is given by the average of the two dual connections (resp. Christoffel symbols)
$$\bar \nabla = \frac{\nabla \ + \ ^*\nabla}{2} \ \ \ \text{and} \ \ \ \bar \Gamma_{ij,k} = \frac{\Gamma_{ij,k} \ + \ ^*\Gamma_{ij,k}}{2}.$$
The Chistoffel symbols of the first kind $\bar \Gamma_{ij,k}$ and the Christoffel symbols (of the second kind) $\bar \Gamma_{ij}^k$ defined in the main article are related by the following formula
$$ \bar \Gamma_{ij}^k = \sum_{l=1}^m \bar \Gamma_{ij,l}\ g^{lk}.$$
Here, $g^{lk}$ is the inverse matrix coefficients of $g$. These two previous formulas provide another way for computing the Levi-Civita Christoffel symbols. This will be useful in Section \ref{appendix: computing FIM and Christoffels} for the truncated normal family.

\section{Computations of FIM and Christoffel symbol coefficients} \label{appendix: computing FIM and Christoffels}

We will now compute the FIM coefficients and the Christoffel symbols for some families of probability distributions. This will prove Proposition 3.1 from the main article. These computations are motivated by the fact that we need to compute the Christoffel symbols in order to approximate the Fisher-Rao spheres.

\subsection{Truncated normal family} 
We will explicitly compute the FIM $I_\theta^{[a,b]}$ and Christoffel symbols $\bar \Gamma_{ij,k}$ of the truncated normal family $\mathcal N_{[a,b]}$. We use the formula \cite{nielsen2020elementary}
\begin{align}\label{eq: FIM with KL div}
    (I_\theta^{[a,b]})_{ij} = -\partial_{\theta^i}\partial_{\theta_0^j} \Big[\mathcal D(q_\theta | q_{\theta_0})\Big]_{\theta=\theta_0}.
\end{align}
Here, $I_\theta^{[a,b]}$ is the FIM and $q_\theta$ is the truncated density and $\mathcal D$ is the Kullback-Leibler divergence defined as $\mathcal D(q_\theta| q_{\theta_0}):= \mathbb E_{X\sim P_\theta} \left[\log\left({q_\theta}/{q_{\theta_0}}(X)\right)\right]$. For the Christoffel symbols $\bar \Gamma_{ij,k}$, we first compute $\Gamma_{ij,k}$ and $ \Gamma_{ij,k}^*$ (see Section \ref{appendix: information geometry} for explanation)
$$ \Gamma_{ij,k} (\theta_0) =  - \partial_{\theta^i} \partial_{\theta^j} \partial_{\theta_0^k} \big[ \mathcal D(q_\theta | q_{\theta_0})\big]_{\theta=\theta_0} \ \ \text{ and } \ \ ^* \Gamma_{ij,k}  (\theta_0) =  - \partial_{\theta^k} \partial_{\theta_0^i} \partial_{\theta_0^j} \big[ \mathcal D(q_\theta | q_{\theta_0}) \big]_{\theta=\theta_0}.$$
Then, we take the average to get the Levi-Civita Christoffel symbols $\bar \Gamma_{ij,k}.$

\subsubsection{Computing the conditional expectation and variance of Gaussian variables}
Let $B=[a,b]$, it denotes the truncation domain. Denoting $P_\theta =\mathcal N(\mu,\sigma^2)$, for $\theta=(\mu,\sigma)$, with density $p_\theta$, the conditional expectation and variance over $B$ of a Gaussian random variable is given by 
$$ \mu_B := \mathbb E_{X\sim q_\theta}(X) =  \frac{1}{P_\theta(B)}\int_B x p_\theta(x) dx = \frac{-\sigma^2}{P_\theta(B)} \int_B \frac{-(x -\mu)}{\sigma^2} p_\theta(x) dx + \mu = -\sigma^2 \big[q_\theta\big]_a ^b + \mu, $$
 and
\begin{align*}
    \sigma_B^2  &:= \text{Var}_{X\sim q_\theta}(X) \\
    &=\frac{1}{P_\theta(B)} \int_B (x-\mu_B)^2p_\theta(x) dx \\
    &= \frac{1}{P_\theta(B)} \left( \int_B (x-\mu)^2p_\theta(x) dx  + 2(\mu - \mu_B)\int_B (x-\mu) p_\theta(x) dx + \int_B (\mu-\mu_B)^2p_\theta(x) dx  \right) \\
    &= \frac{-\sigma^2}{P_\theta(B)} \left(\big[ (x-\mu)p_\theta\big]_a^b -\int_B p_\theta(x)dx   \right) -2(\mu - \mu_B)^2 + (\mu-\mu_B)^2\\
    &= -\sigma^2 \left( \big[ (x-\mu)q_\theta\big]_a^b -1 \right) - (\mu - \mu_B)^2.
\end{align*}

\subsubsection{Computing the Kullback-Leibler divergence of truncated Gaussian densities}
Let us compute the Kullback-Leibler divergence on the truncated normal family 
$$ D(\theta,\theta_0) := \mathcal D( q_\theta | q_{\theta_0}) = \int q_\theta \log \left(\frac{q_\theta}{q_{\theta_0}} \right) = \int q_\theta \log q_\theta - \int q_\theta \log q_{\theta_0}. $$
Setting $ A_\theta := \int q_\theta \log q_\theta,$ we have
\begin{align}
    D(\theta,\theta_0) &= A_\theta - \int q_\theta \log q_{\theta_0} \nonumber \\
    &= A_\theta - \int_B \frac{p_\theta}{P_\theta(B)} \log p_{\theta_0} + \int_B \frac{p_\theta}{P_\theta(B)} \log P_{\theta_0}(B) \nonumber \\
    & = A_\theta - \int_B \frac{p_\theta}{P_\theta(B)} \log p_{\theta_0} + \underbrace{\log P_{\theta_0}(B)}_\textrm{$=:C_{\theta_0}$} \nonumber \\
    & = A_\theta + C_{\theta_0} - \frac{1}{P_\theta(B)}\int_B p_\theta \log p_{\theta_0} \label{divergence} ,
\end{align}
$A_\theta$ and $C_{\theta_0}$ depend on $\theta$ and $\theta_0$ respectively. Let us now compute the last term explicitly:
\begin{align*}
    \int_B p_\theta \log p_{\theta_0} &= \int_B p_\theta \left( -\log (\sqrt{2\pi} \sigma_0) - \frac{(x-\mu_0)^2}{2\sigma_0^2} \right) \\
    & = -\log (\sqrt{2\pi} \sigma_0) P_\theta(B) -\frac{1}{2\sigma_0^2} \int_B (x-\mu_0)^2 p_\theta. 
\end{align*}
We finally obtain
\begin{align*}
    \frac{1}{2\sigma_0^2}\int_B (x-\mu_0)^2 p_\theta &= \frac{1}{2\sigma_0^2} \left(\int_B (x-\mu_B)^2 p_\theta + 2(\mu_B - \mu_0) \underbrace{\int_B (x-\mu_B)p_\theta}_{=0} +(\mu_B - \mu_0)^2 P_\theta(B) \right)\\
    & = P_\theta(B) \frac{\sigma_B^2}{2\sigma_0^2} +\frac{(\mu_B - \mu_0)^2}{2\sigma_0^2} P_\theta(B).
\end{align*}

Now we can express (\ref{divergence}) using $\mu_B$ and $\sigma_B^2$ (which depend on $\theta$)
$$D(\theta,\theta_0) = A_\theta + C_{\theta_0}  + \log(\sqrt{2\pi}) + \log \sigma_0 + \frac{\sigma_B^2 + (\mu_B - \mu_0)^2}{2\sigma_0^2}.$$

\subsubsection{Computing the Fisher Information matrix} \label{sec: FIM of trunc normal family}

In the computation of the FIM using (\ref{eq: FIM with KL div}), we only end up with terms that depend both on $\theta_0$ and $\theta$. This means that $A_\theta$ and $C_{\theta_0}$ disappear. Therefore, we omit them from the start. For $\partial_{\mu_0} D$ and $\partial_{\sigma_0} D$ we obtain 
$$ \partial_{\mu_0} D = \frac{2(-1)(\mu_B - \mu_0)}{2\sigma_0^2}= -\frac{\mu_B - \mu_0}{\sigma_0^2} \ \ \ \text{ and } \ \ \ \partial_{\sigma_0} D = \frac{1}{\sigma_0} - \frac{\sigma_B^2 + (\mu_B -\mu_0)^2}{\sigma_0^3}.$$
Now for $\partial_\mu \partial_{\mu_0} D $, $\partial_\sigma \partial_{\mu_0} D $ and $\partial_\sigma \partial_{\sigma_0} D $, we get
\begin{align} 
    \partial_\mu \partial_{\mu_0} D  &= - \frac{ \partial_\mu \mu_B(\theta) }{\sigma_0^2},\label{metrque}\\
    \partial_\sigma \partial_{\mu_0} D &= -\frac{\partial_\sigma \mu_B(\theta)}{\sigma_0^2},\label{metrque2}\\
\partial_\sigma \partial_{\sigma_0} D &= \frac{-1}{\sigma_0^3} \big( \partial_\sigma \sigma_B^2(\theta) + 2(\mu_B - \mu_0) \partial_\sigma \mu_B(\theta) \big).\label{metrque3}    
\end{align} 

Let us now compute the terms $\partial_\mu \mu_B(\theta)$, $\partial_\sigma \mu_B (\theta)$ and $\partial_\sigma \sigma_B^2 (\theta)$:
\begin{align*} \partial_\mu \mu_B(\theta) &= 1 - \sigma^2 \big[\partial_\mu q_\theta(x) \big]_a^b = 1 - \sigma^2 \left[ q_\theta(x)\left( \frac{x-\mu}{\sigma^2} + \big[q_\theta\big]_a^b\right) \right]_a^b,\\
\partial_\sigma \mu_B(\theta) &= -2\sigma \big[ q_\theta(x) \big]_a^b -\sigma^2 \big[ \partial_\sigma q_\theta(x) \big]_a^b\\
&= - 2 \sigma\big[q_\theta\big]_a^b  -\sigma\left[ q_\theta(x)\left([(x- \mu)q_\theta]_a^b - 1  +\frac{(x-\mu)^2}{\sigma^2}\right) \right]_a^b\\
&= -\sigma \left[ q_\theta(x)\left([(x- \mu)q_\theta]_a^b + 1  +\frac{(x-\mu)^2}{\sigma^2}\right) \right]_a^b
\partial_\sigma \\
\sigma_B^2(\theta) &= 2\sigma - 2\sigma\big[ (x-\mu)q_\theta \big]_a^b - \sigma^2\big[ (x-\mu)\partial_\sigma q_\theta \big]_a^b - 2(\mu-\mu_B)\partial_\sigma \mu_B(\theta) \\
    &= 2\sigma \left( 1 - \big[ (x-\mu)q_\theta \big]_a^b \right) - \sigma^2\left[ (x-\mu)q_\theta(x)\left( \frac{x-\mu}{\sigma^2} + \big[q_\theta\big]_a^b\right) \right]_a^b\\ & - 2(\mu-\mu_B)\partial_\sigma \mu_B(\theta). \\
\end{align*}

Lastly, coming back to (\ref{metrque}), (\ref{metrque2}) and (\ref{metrque3}) we get
\begin{align*} 
    \partial_\mu \partial_{\mu_0} D  &= - \frac{ \partial_\mu \mu_B(\theta) }{\sigma_0^2} = - \frac{1}{\sigma_0^2} + \frac{\sigma^2 \left[ q_\theta(x)\left( \frac{x-\mu}{\sigma^2} + \big[q_\theta\big]_a^b\right) \right]_a^b}{\sigma_0^2},\\
    \partial_\sigma \partial_{\mu_0} D &= -\frac{\partial_\sigma \mu_B(\theta)}{\sigma_0^2} = \frac{\sigma \left[ q_\theta(x)\left([(x- \mu)q_\theta]_a^b + 1  +\frac{(x-\mu)^2}{\sigma^2}\right) \right]_a^b}{\sigma_0^2},\\ 
\partial_\sigma \partial_{\sigma_0} D &= \frac{-1}{\sigma_0^3} \big( \partial_\sigma \sigma_B^2(\theta) + 2(\mu_B - \mu_0) \partial_\sigma \mu_B(\theta) \big) \\
 &= \frac{-1}{\sigma_0^3} \left(2\sigma \left( 1 - \big[ (x-\mu)q_\theta \big]_a^b \right) - \sigma^2\left[ (x-\mu)q_\theta(x)\left( \frac{x-\mu}{\sigma^2} + \big[q_\theta\big]_a^b\right) \right]_a^b  \right).     
\end{align*} 
Further, taking $\theta=\theta_0$ and multiplying by $-1$, we obtain
\begin{align*} 
    (I_{\theta_0})_{11}=-\partial_\mu \partial_{\mu_0} D_{|\theta=\theta_0} &=  \frac{1}{\sigma_0^2} - \left[ q_{\theta_0}(x)\left( \frac{x-\mu_0}{\sigma_0^2} + \big[q_{\theta_0}\big]_a^b\right) \right]_a^b,\\
    (I_{\theta_0})_{12} =-\partial_\sigma \partial_{\mu_0} D_{|\theta=\theta_0} &= \frac{-\sigma_0 \left[ q_{\theta_0}(x)\left([(x- \mu_0)q_{\theta_0}]_a^b + 1  +\frac{(x-\mu_0)^2}{\sigma_0^2}\right) \right]_a^b}{\sigma_0^2},\\
(I_{\theta_0})_{22}=-\partial_\sigma \partial_{\sigma_0} D_{|\theta=\theta_0}
 &= \frac{2}{\sigma_0^2} - \frac{2 \big[ (x-\mu_0)q_{\theta_0} \big]_a^b }{\sigma_0^2} -  \left[ \frac{(x-\mu_0)}{\sigma_0} q_{\theta_0}(x)\left( \frac{x-\mu_0}{\sigma_0^2} + \big[q_{\theta_0}\big]_a^b\right) \right]_a^b  .     
\end{align*} 
\subsubsection{A few remarks on the truncated normal family}\label{appendix: rmk on trunc norm fam}
Here, we observe that when $a\to -\infty$ and $b\to \infty$, we get
$$(I_{\theta_0})_{11} \to \frac{1}{\sigma_0^2}, \ \ \  (I_{\theta_0})_{12} \to 0 \ \ \ \text{and} \ \ \   (I_{\theta_0})_{22} \to \frac{2}{\sigma_0^2}.$$
In other words, $I_\theta^{[a,b]}$ converges to the FIM of the non-truncated normal family $I_\theta$. 

Now, if we fix $a$ and $b$, for $\theta=(\mu,\sigma)$ such that $\mu \in (a,b)$ and $\sigma$ is close to $0$, then the normalization constant $N_{\theta}:= \int_a^b (\sqrt{2\pi}\sigma)^{-1}\exp(-(x-\mu)^2/(2\sigma^{2}) dx$
is close to $1$. Indeed, most of the mass of the normal density $p_\theta$ is already inside $[a,b]$. As a consequence, $q_\theta$ and $\mathcal N(\mu,\sigma^2)$ are close. So that, for such $\theta$, the matrices $I_\theta$ and $I_\theta^{[a,b]}$ are also close. This means that in areas where $\mu \in (a,b)$ and $\sigma$ is close to $0$, both of these metrics induce a very similar geometry.

\subsubsection{Computing the Levi-Civita Christoffel symbols}
As explained previously, the Levi-Civita Christoffel symbols $\bar \Gamma_{ij,k}$ is computed by averaging the Christoffel symbols of $\Gamma_{ij,k}$ and $^*\Gamma_{ij,k}$
$$ \bar \Gamma_{ij,k} = \frac{\Gamma_{ij,k} +  ^*\Gamma_{ij,k}}{2}.$$
It remains to express it as a function of $\mu_B$ and $\sigma_B^2$ and of their partial derivatives w.r.t. $\mu$ and $\sigma$.
\paragraph{Computing $^* \Gamma_{ij,k}$ }

First, for $\partial_{\mu_0} D$ and $\partial_{\sigma_0} D$ we have
\begin{align*}
    \partial_{\mu_0} D &= -\frac{\mu_B - \mu_0}{\sigma_0^2},\\
    \partial_{\sigma_0} D &= \frac{1}{\sigma_0} - \frac{\sigma_B^2 + (\mu_B -\mu_0)^2}{\sigma_0^3}.
\end{align*} 

We differentiate these two expressions with respect to $\partial_{\mu_0}$ and $\partial_{\sigma_0}$ to obtain $\partial_{\mu_0} \partial_{\mu_0} D$, $\partial_{\mu_0} \partial_{\sigma_0} D$ and $\partial_{\sigma_0} \partial_{\sigma_0} D$ :
\begin{align*}
\partial_{\mu_0} \partial_{\mu_0} D &= \frac{1}{\sigma_0^2},\\
\partial_{\mu_0} \partial_{\sigma_0} D &= \frac{2(\mu_B - \mu_0)}{\sigma_0^3},\\
\partial_{\sigma_0} \partial_{\sigma_0} D &= \frac{-1}{\sigma_0^2} + 3\frac{\sigma_B^2 + (\mu_B - \mu_0)^2}{\sigma_0^4}.
\end{align*}

Now, it remains to differentiate these three expressions w.r.t. to $\mu$ and $\sigma$ to obtain the following six functions $ \partial_\mu \partial_{\mu_0} \partial_{\mu_0} D$, $\partial_\sigma \partial_{\mu_0} \partial_{\mu_0} D$,  $\partial_\mu \partial_{\mu_0} \partial_{\sigma_0} D$, $\partial_\sigma \partial_{\mu_0} \partial_{\sigma_0} D$, $\partial_\mu \partial_{\sigma_0} \partial_{\sigma_0} D$ and $\partial_\sigma \partial_{\sigma_0} \partial_{\sigma_0} D$. We obtain 
\begin{align*}
  \partial_\mu \partial_{\mu_0} \partial_{\mu_0} D &= 0,\\  
  \partial_\sigma \partial_{\mu_0} \partial_{\mu_0} D &=  0,\\
  \partial_\sigma \partial_{\mu_0} \partial_{\sigma_0} D &=  \frac{2 \partial_\sigma [\mu_B(\theta) ]}{\sigma_0^3},\\
  \partial_\mu \partial_{\sigma_0} \partial_{\sigma_0} D &= \frac{3}{\sigma_0^4} \Big( \partial_\mu [\sigma_B^2(\theta) ] + 2 \partial_\mu [\mu_B(\theta)] (\mu_B-\mu_0) \Big),\\
  \partial_\sigma \partial_{\sigma_0} \partial_{\sigma_0} D &= \frac{3}{\sigma_0^4} \Big( \partial_\sigma [\sigma_B^2(\theta) ] + 2 \partial_\sigma [\mu_B(\theta)] (\mu_B-\mu_0) \Big).
\end{align*}

Let us now compute the partial derivatives of $\mu_B$ and $\sigma_B^2$ in terms of the truncated Gaussian density $q_\theta$
 \begin{align*}
\partial_\mu \mu_B(\theta) &= 1 - \sigma^2 \big[ \partial_\mu q_\theta \big]_a^b,\\
\partial_\sigma \mu_B(\theta) &= -2\sigma \big[ q_\theta \big]_a^b - \sigma^2 \big[ \partial_\sigma q_\theta \big]_a^b,\\
    \partial_\mu \sigma_B^2 (\theta) &= - \sigma^2  \big[- q_\theta +(x-\mu) \partial_\mu q_\theta \big]_a^b - 2 (\mu - \mu_B) \Big( 1 - \partial_\mu \big[\mu_B(\theta)\big] \Big),\\
    \partial_\sigma \sigma_B^2(\theta) &= 2\sigma \left( 1 - \big[ (x-\mu)q_\theta \big]_a^b \right) - \sigma^2\Big[ (x-\mu)\partial_\sigma q_\theta \Big]_a^b  + 2(\mu-\mu_B)\partial_\sigma \mu_B(\theta). 
\end{align*}

Coming back to the symbols $^* \Gamma_{ij,k}$, we have
\begin{align*}  
^* \Gamma_{11,1}&=  -\partial_\mu \partial_{\mu_0} \partial_{\mu_0} D_{|\theta = \theta_0} = 0, \\
^* \Gamma_{11,2}&= -\partial_\sigma \partial_{\mu_0} \partial_{\mu_0} D_{|\theta = \theta_0} =  0,\\
^* \Gamma_{12,1} &= \ ^*\Gamma_{21,1}= -\partial_\mu \partial_{\mu_0} \partial_{\sigma_0} D_{|\theta = \theta_0} = - \frac{2 \partial_\mu [\mu_B(\theta_0) ]}{\sigma_0^3},\\
^* \Gamma_{12,2}&=\ ^* \Gamma_{21,2}= -\partial_\sigma \partial_{\mu_0} \partial_{\sigma_0} D_{|\theta = \theta_0} =  -\frac{2 \partial_\sigma [\mu_B(\theta_0) ]}{\sigma_0^3},\\
^* \Gamma_{22,1} &= -\partial_\mu \partial_{\sigma_0} \partial_{\sigma_0} D_{|\theta = \theta_0} = -\frac{3}{\sigma_0^4} \Big( \partial_\mu [\sigma_B^2(\theta_0) ] + 2 \partial_\mu [\mu_B(\theta_0)] (\mu_B(\theta_0)-\mu_0) \Big),\\
^* \Gamma_{22,2}&= -\partial_\sigma \partial_{\sigma_0} \partial_{\sigma_0} D_{|\theta = \theta_0} = -\frac{3}{\sigma_0^4} \Big( \partial_\sigma [\sigma_B^2(\theta_0) ] + 2 \partial_\sigma [\mu_B(\theta_0)] (\mu_B(\theta_0)-\mu_0) \Big).
\end{align*}

\paragraph{Computing $\Gamma_{ij,k}$}

We have already computed  $\partial_{\mu_0} D$ and $\partial_{\sigma_0} D$ 
\begin{align*}
\partial_{\mu_0} D = -\frac{\mu_B - \mu_0}{\sigma_0^2} \ \ \ \text{and} \ \ \ 
\partial_{\sigma_0} D = \frac{1}{\sigma_0} - \frac{\sigma_B^2 + (\mu_B -\mu_0)^2}{\sigma_0^3}.
\end{align*}
We also computed $\partial_\mu \partial_{\mu_0} D$, $\partial_\sigma \partial_{\mu_0} D$ and $\partial_\sigma \partial_{\sigma_0} D$
\begin{align*}
    \partial_\mu \partial_{\mu_0} D  &= - \frac{ \partial_\mu \mu_B(\theta) }{\sigma_0^2},\\
    \partial_\sigma \partial_{\mu_0} D &= -\frac{\partial_\sigma \mu_B(\theta)}{\sigma_0^2},\\
    \partial_\sigma \partial_{\sigma_0} D &= \frac{-1}{\sigma_0^3} \big( \partial_\sigma \sigma_B^2(\theta) + 2(\mu_B - \mu_0) \partial_\sigma \mu_B(\theta) \big).    
\end{align*} 
It remains to compute $ \partial_\mu\partial_{\sigma_0} D$
$$\partial_\mu\partial_{\sigma_0}  D = \frac{-1}{\sigma_0^3}\Big( \partial_\mu [\sigma_B^2]  + 2 \partial_\mu[\mu_B](\mu_B - \mu_0) \Big). $$
Now differentiating the last four expressions with respect to $\partial_\mu $ and $\partial_\sigma$, we obtain
\begin{align*}
    \partial_\mu \partial_\mu \partial_{\mu_0} D  &= - \frac{ \partial_\mu^2 \mu_B(\theta) }{\sigma_0^2},\\
    \partial_\sigma \partial_\mu \partial_{\mu_0} D  &= - \frac{ \partial_{\sigma\mu} ^2 \mu_B(\theta) }{\sigma_0^2},\\
    \partial_\sigma \partial_\sigma \partial_{\mu_0} D &= -\frac{\partial_\sigma^2 \mu_B(\theta)}{\sigma_0^2},\\
    \partial_\mu \partial_\mu \partial_{\sigma_0} D &= \frac{-1}{\sigma_0^3} \Big( \partial_\mu^2 \sigma_B^2(\theta) + 2(\mu_B - \mu_0) \partial_\mu^2 \mu_B(\theta) + 2\big(\partial_\mu \mu_B(\theta) \big)^2 \Big),\\
\partial_\mu \partial_\sigma \partial_{\sigma_0} D &= \frac{-1}{\sigma_0^3} \Big( \partial_\mu \partial_\sigma \sigma_B^2(\theta) + 2(\mu_B - \mu_0) \partial_{\mu\sigma}^2 \mu_B(\theta) + 2\big(\partial_\sigma \mu_B(\theta) \big)\big(\partial_\mu \mu_B(\theta) \big) \Big),\\
\partial_\sigma  \partial_\sigma \partial_{\sigma_0} D &= \frac{-1}{\sigma_0^3} \Big( \partial_\sigma^2 \sigma_B^2(\theta) + 2(\mu_B - \mu_0) \partial_\sigma^2 \mu_B(\theta) + 2\big(\partial_\sigma \mu_B(\theta) \big)^2 \Big).    
\end{align*} 
Thus, the Christoffel symbols $\Gamma_{ij,k}$ are given by
\begin{align*}
    \Gamma_{11,1} &= -\partial_\mu \partial_\mu \partial_{\mu_0} D_{|\theta=\theta_0}  = \frac{ \partial_\mu^2 \mu_B(\theta_0) }{\sigma_0^2},\\
    \Gamma_{21,1}&=\Gamma_{12,1} = -\partial_\sigma \partial_\mu \partial_{\mu_0} D_{|\theta=\theta_0}  =  \frac{ \partial_{\sigma\mu} ^2 \mu_B(\theta_0) }{\sigma_0^2},\\
    \Gamma_{22,1} &= -\partial_\sigma \partial_\sigma \partial_{\mu_0} D_{|\theta=\theta_0} = \frac{\partial_\sigma^2 \mu_B(\theta_0)}{\sigma_0^2},\\
    \Gamma_{11,2} &= -\partial_\mu \partial_\mu \partial_{\sigma_0} D_{|\theta=\theta_0} = \frac{1}{\sigma_0^3} \Big( \partial_\mu^2 \sigma_B^2(\theta_0) + 2(\mu_B(\theta_0) - \mu_0) \partial_\mu^2 \mu_B(\theta_0) + 2\big(\partial_\mu \mu_B(\theta_0) \big)^2 \Big),\\
\Gamma_{12,2} &=\Gamma_{21,2} = -\partial_\mu \partial_\sigma \partial_{\sigma_0} D_{|\theta=\theta_0}\\ 
&= \frac{1}{\sigma_0^3} \Big( \partial_\mu \partial_\sigma \sigma_B^2(\theta_0) + 2(\mu_B(\theta_0) - \mu_0) \partial_{\mu\sigma}^2 \mu_B(\theta_0) +2\big(\partial_\sigma \mu_B(\theta_0) \big)\big(\partial_\mu \mu_B(\theta_0) \big) \Big),\\
\Gamma_{22,2} &= -\partial_\sigma  \partial_\sigma \partial_{\sigma_0} D_{|\theta=\theta_0} = \frac{1}{\sigma_0^3} \Big( \partial_\sigma^2 \sigma_B^2(\theta_0) + 2(\mu_B(\theta_0) - \mu_0) \partial_\sigma^2 \mu_B(\theta_0) + 2\big(\partial_\sigma \mu_B(\theta_0) \big)^2 \Big).    
\end{align*} 

\paragraph{First and second order partial derivatives of $\mu_B$ and $\sigma_B^2$ with respect to $(\mu,\sigma)$}
The two conjugate Christoffel symbols $\Gamma_{ij,k}$ and $\Gamma_{ij,k}^*$ are functions of the first and second order partial derivatives of $\mu_B$ and $\sigma_B^2$. Now we explicitly compute these derivatives. To begin with, let us compute the first order partial derivatives of $\mu_B$:
\begin{align*}\partial_\mu \mu_B(\theta) &= 1 - \sigma^2 \big[ \partial_\mu q_\theta \big]_a^b,\\
    \partial_\sigma \mu_B(\theta) &= -2\sigma \big[ q_\theta \big]_a^b - \sigma^2 \big[ \partial_\sigma q_\theta \big]_a^b,\\
    \partial_\mu \sigma_B^2 (\theta) &= - \sigma^2  \big[- q_\theta +(x-\mu) \partial_\mu q_\theta \big]_a^b - 2 (\mu - \mu_B) \Big( 1 - \partial_\mu \big[\mu_B(\theta)\big] \Big),\\
    \partial_\sigma \sigma_B^2(\theta) &= 2\sigma \left( 1 - \big[ (x-\mu)q_\theta \big]_a^b \right) - \sigma^2\Big[ (x-\mu)\partial_\sigma q_\theta \Big]_a^b + 2(\mu-\mu_B)\partial_\sigma \mu_B(\theta). \\
\end{align*}
For the second order partial derivatives of $\mu_B$ and $\sigma_B ^2$, we obtain 
\begin{align*}
\partial_\mu ^2 \mu_B(\theta) &= -\sigma^2 \big[\partial_\mu ^2 q_\theta \big]_a ^b,\\
\partial_{\sigma\mu} ^2 \mu_B(\theta) &= -2 \sigma \big[ \partial_\mu q_\theta \big]_a ^b - \sigma^2 \big[ \partial_{\sigma\mu} ^2 q_\theta \big]_a ^b,\\
\partial_{\sigma} ^2 \mu_B(\theta) &= -2\big[q_\theta \big]_a ^b -4\sigma \big[\partial_\sigma q_\theta \big]_a ^b - \sigma^2 \big[\partial_{\sigma\mu} ^2 q_\theta \big]_a ^b,
\end{align*}
\begin{align*}
\partial_{\mu}^2 \sigma_B^2 &=-\sigma^2 \big[-2\partial_\mu q_\theta + (x-\mu) \partial_\mu ^2 q_\theta \big]_a^b -2\big(1-\partial_\mu \mu_B(\theta) \big)^2 + 2(\mu - \mu_B) \partial_\mu ^2 \mu_B, \\
\partial_{\sigma\mu}^2 \sigma_B^2 &=  -2\sigma\big[-q_\theta + (x-\mu) \partial_\mu q_\theta \big]_a^b - \sigma^2 \big[ -\partial_\sigma q_\theta + (x-\mu)\partial_{\sigma\mu} ^2 q_\theta \big]_a^b\\ 
&+ 2 \partial_\sigma \mu_B (1- \partial_\mu \mu_B) + 2\partial_{\sigma\mu} ^2  \mu_B (\mu - \mu_B),\\
\partial_{\sigma}^2 \sigma_B^2 &=  2\left(1-\big[(x-\mu) q_\theta \big]_a^b \right)- 4\sigma \big[(x-\mu) \partial_\sigma q_\theta \big]_a^b - \sigma^2 \big[(x-\mu) \partial_\sigma ^2 q_\theta \big]_a^b \\
&+ 2 (\partial_\sigma \mu_B )^2  - 2 (\mu - \mu_B) \partial_\sigma ^2 \mu_B.
\end{align*}

\paragraph{First and second order partial derivatives of $q_\theta$ with respect to $(\mu,\sigma)$}
We also need to compute the partial derivatives of $q_\theta$. For the first order, we get
\begin{align*}
\partial_\mu q_\theta(x) &= q_\theta(x) \left( \frac{x-\mu}{\sigma^2} + \big[q_\theta(x) \big]_a ^b  \right),\\
\partial_\sigma q_\theta(x) &= q_\theta(x) \left(  \frac{1}{\sigma}\big[ (x-\mu) q_\theta(x) \big]_a ^b -\frac{1}{\sigma} + \frac{(x-\mu)^2}{\sigma^3}  \right).    
\end{align*}
The computation of the second order derivatives follows
\begin{align*}
\partial_\mu ^2 q_\theta(x) &= \partial_\mu q_\theta(x) \left( \frac{x-\mu}{\sigma^2} + \big[q_\theta(x) \big]_a ^b  \right) + q_\theta(x) \left( \frac{-1}{\sigma^2} + \big[\partial_\mu q_\theta(x) \big]_a ^b  \right),\\
\partial_{\sigma\mu} ^2 q_\theta(x) &= \partial_\sigma q_\theta(x) \left( \frac{x-\mu}{\sigma^2} + \big[q_\theta(x) \big]_a ^b  \right) + q_\theta(x) \left( \frac{-2(x-\mu)}{\sigma^3} + \big[\partial_\sigma q_\theta(x) \big]_a ^b  \right),\\
\partial_\sigma ^2 q_\theta(x) &=  \partial_\sigma q_\theta(x) \left( \frac{1}{\sigma}\big[ (x-\mu) q_\theta(x) \big]_a ^b -\frac{1}{\sigma} + \frac{(x-\mu)^2}{\sigma^3}  \right) \\
&+ q_\theta(x) \left( \frac{-1}{\sigma}\big[ (x-\mu) q_\theta(x) \big]_a ^b 
+\frac{1}{\sigma}\big[ (x-\mu) \partial_\sigma q_\theta(x) \big]_a ^b +\frac{1}{\sigma^2} + \frac{-3(x-\mu)^2}{\sigma^4}  \right).
\end{align*} 

Let us summarize the previous computations:
\begin{enumerate}
    \item we computed the FIM of the truncated normal family on $[a,b]$;
    \item we expressed the Christoffel symbols as a function of the conditional mean $\mu_{[a,b]}$ and conditional variance $\sigma_{[a,b]}^2$ (that are functions of $\theta$) as well as their partial derivatives;
    \item we then gave the explicit expression of these partial derivatives as a function of the truncated density $q_\theta$ on $[a,b]$ and the partial derivatives of the latter;
    \item lastly, we explicitly computed the partial derivatives of $q_\theta$.
\end{enumerate}
This concludes the computation of the FIM and Christoffel symbols of the truncated normal family on $[a,b].$

\subsection{Gumbel family}
The Gumbel family is a location-scale family where the initial density $p$ is given by $p(x) = \exp\left(-x - e^{-x}\right).$
\subsubsection{Non-truncated case}
The FIM for the non-truncated version is given by
$$I_\theta = \frac{1}{s^2}\begin{bmatrix}
    1 & \gamma -1\\
    \gamma -1 & \beta + 1
\end{bmatrix},$$
where $\gamma \approx 0.5772...$ is the Euler-Mascheroni constant and $\beta$ is given by the integral
$$\beta = \int_0^\infty  \log(x)^2 x e^{-x}dx.$$
In this case, the Christoffel symbols can be easily computed. Indeed, they only depend on $s$. For the non-truncated Gumbel family, we numerically compute $\beta$ using the Gaussian quadrature method of SciPy\footnote{For documentation, see \href{https://docs.scipy.org/doc/scipy/tutorial/integrate.html}{https://docs.scipy.org/doc/scipy/tutorial/integrate.html}}.
\subsubsection{Truncated case}
The density of the truncated Gumbel distribution on $[a,b]$ writes $q_\theta(x) = \frac{1}{N_\theta} p_\theta(x) \textbf{1}_{x\in [a,b]},$ where $N_\theta$ is the normalizing constant
$$ N_\theta = \int_a^b \frac{1}{s^2} \exp\left(-\frac{x-m}{s} - e^{-\frac{x-m}{s}}\right) dx = F\left(\frac{b-m}{s} \right) - F\left(\frac{a-m}{s} \right),$$
and $F$ the cumulative distribution function of $p$ given by
$ F(x) = \exp(-e^{-x}).$
In this case, we computed the FIM for the truncated family using the formula
$$ \left(I_\theta^{[a,b]}\right)_{ij} = - \mathbb E_{X \sim q_\theta} \Big[ \partial_{\theta_i\theta_j}\log q_\theta(X) \Big],$$
where $(\theta_1,\theta_2) = (m,s)$. For $X \sim q_\theta$, we have
$$ \log q_\theta(X) = -2\log s - \frac{X-m}{s} - \exp\left( -\frac{X-m}{s} \right) - \log N_\theta.$$
Taking the Hessian w.r.t. $\theta=(m,s)$ and integrating against $q_\theta$ leads to
\begin{align*}
    (I_\theta^{[a,b]})_{ij} = -\partial_{\theta_i\theta_j} &2\log s - \int_a^b \partial_{\theta_i\theta_j} \left[\frac{x-m}{s}\right] q_\theta(x) dx \\
    &- \int_a^b \partial_{\theta_i\theta_j} \left[\exp\left( -\frac{x-m}{s} \right)\right] q_\theta(x) dx - \big(\text{Hess}_\theta \log N_\theta \big)_{ij}.
\end{align*} 
Notice that the first and forth term in the last equality can be explicitly computed. For the second and third ones, we need to resp. compute integrals of the form (after a variable substitution)
$$ \int_c^d y \exp\left( -y -e^{-y} \right)dy \ \ \ \text{ and } \ \ \ \int_c^d y^2 \exp\left( -2y -e^{-y} \right)dy.$$
Here, $c$ and $d$ depend on $a,b,m,s$. We computed these integrals using the Gaussian quadrature method of SciPy. 
For the Christoffel symbols, we used the standard finite difference method for differentiating the coefficients of $I_\theta^{[a,b]}$ w.r.t. $\theta$:
$$ \partial_{m} I_\theta^{[a,b]} \approx \frac{I_{\theta + (h,0)}^{[a,b]} - I_\theta^{[a,b]}}{h} \ \ \ \text{and} \ \ \ \partial_{s} I_\theta^{[a,b]} \approx \frac{I_{\theta + (0,h)}^{[a,b]} - I_\theta^{[a,b]}}{h},$$
where we took $h = 10^{-7}$. Of course, more advanced numerical differentiation methods can be used instead.

\subsection{Gamma family}
The pdf $p_\theta$ of the Gamma family writes
$$ p_\theta(x) = \frac{\beta^\alpha}{\Gamma(\alpha)}x^{\alpha-1}e^{-\beta x}, \ \ \ \theta = (\alpha,\beta) \in (0,\infty)^2.$$
Here, $\Gamma$ is the Euler integral of the second kind. In this framework we do not consider a truncated version of this family. The FIM $I_\theta$ for this family can be easily computed. Indeed, we have
$$ I_\theta = \begin{bmatrix}
    \psi'(\alpha) & -\frac{1}{\beta}\\
    -\frac{1}{\beta} & \frac{\alpha}{\beta^2}
\end{bmatrix}.$$
Here, the function $\psi$ is the digamma function. To numerically compute its first and second order derivatives, we used the previously discussed finite difference scheme with $h=10^{-7}$. 

\section{A few additional illustrative figures for the density perturbation method}

\begin{figure}[h]
    \centering
    \includegraphics[width=0.7\linewidth]{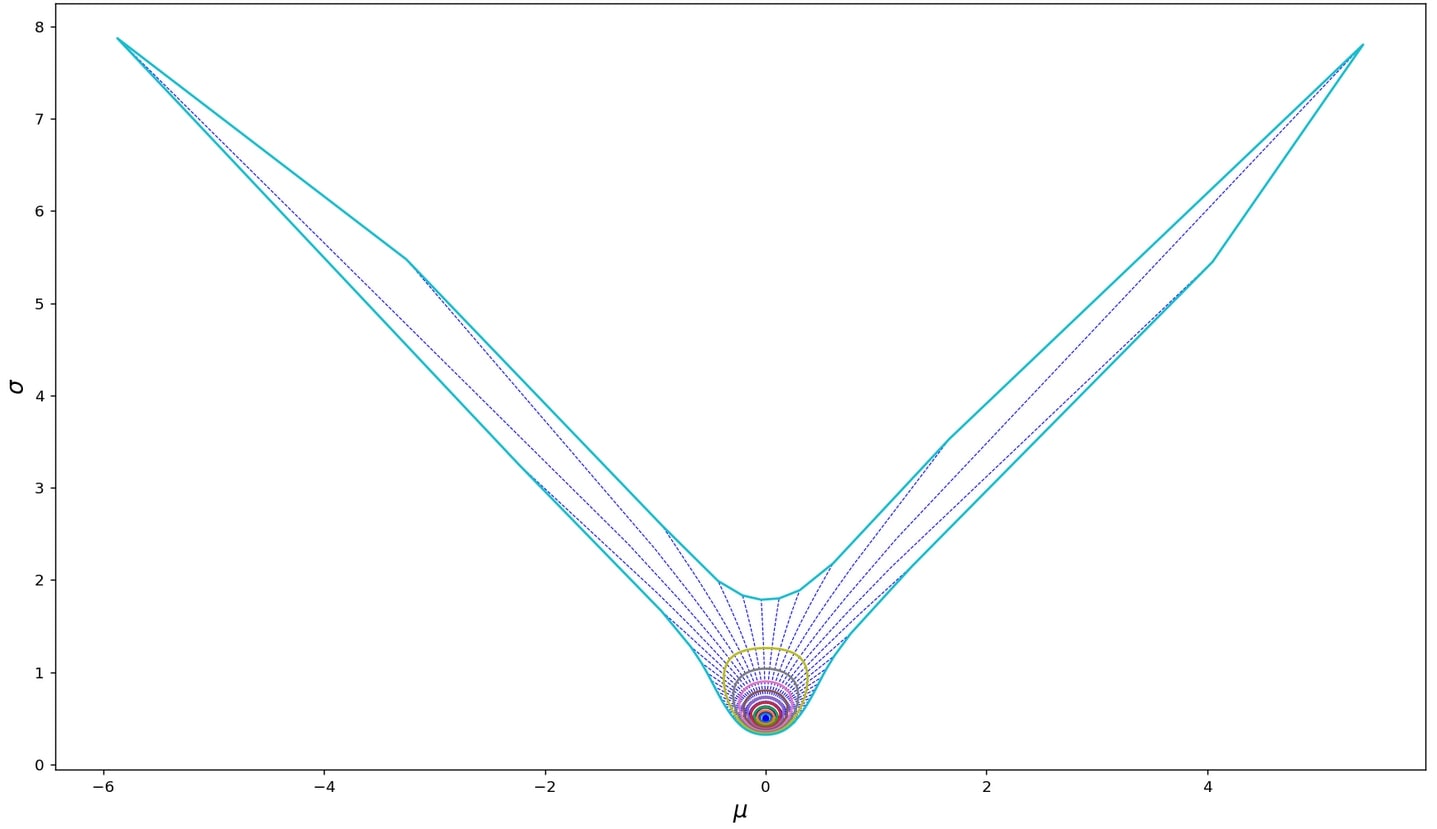}
    \caption{Concentric FR spheres for the truncated normal family on $[-2,2]$ in the $(\mu,\sigma)$ parameter space. The outermost sphere has radius $\delta_{\max}=0.5$ and is very distorted compared to the smaller ones.}
\end{figure}

\begin{figure}[h]
    \centering
    \includegraphics[width=0.7\linewidth]{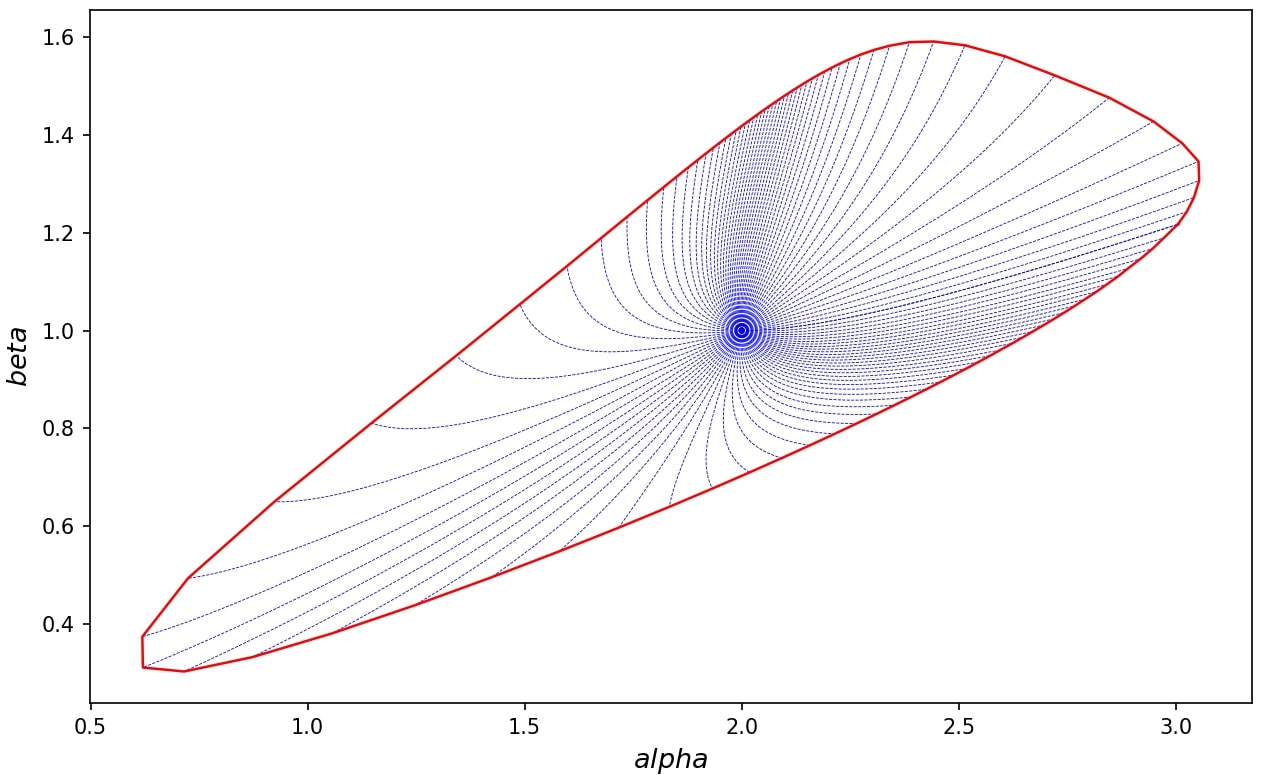}
    \caption{FR sphere (in red) and geodesics (in dashed blue) in the Gamma family in the $(\alpha,\beta)$ parameter space. The sphere is centered at $(2,1)$ and has radius $\delta=0.5$.}
\end{figure}

\begin{figure}[h]
    \centering
    \includegraphics[width=0.7\linewidth]{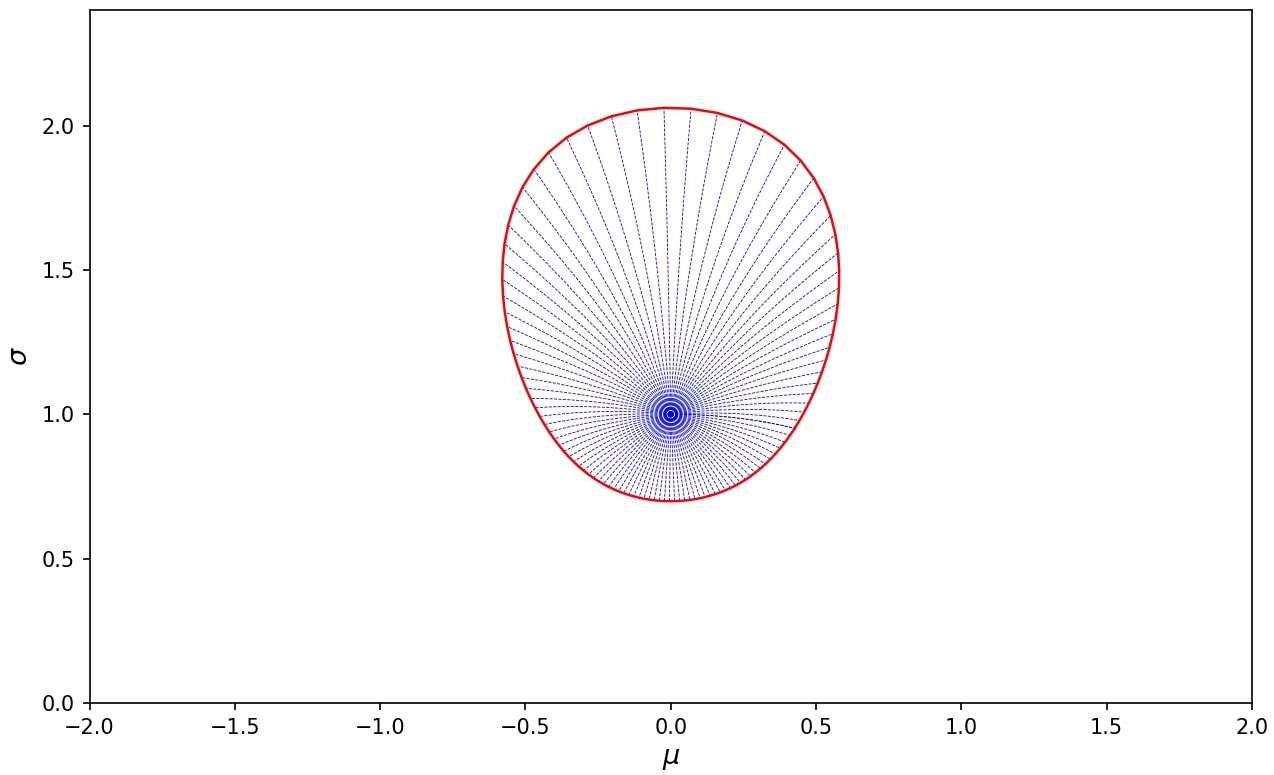}
    \caption{FR sphere (in red) and geodesics (in dashed blue) in the truncated normal family on $[-2,2]$. The sphere is centered at $(0,1)$ with radius $\delta=0.4.$}
\end{figure}

\end{appendices}

\bibliographystyle{plain}
\bibliography{biblio}

\end{document}